\documentclass{amsart}

\usepackage[utf8]{inputenc}
\usepackage[T1]{fontenc}
\usepackage[english]{babel}
\usepackage{amsthm}
\usepackage{amsmath}
\usepackage{mathtools}
\usepackage{tikz-cd}
\usepackage{amsfonts}
\usepackage{amssymb}
\usepackage{graphicx}
\usepackage{xspace}
\usepackage{enumitem}
\usepackage{float}
\floatstyle{plaintop}
\restylefloat{table}

\usepackage[all]{xy}

\usepackage[mathscr]{eucal} 
\usepackage{amsmath} 
\usepackage{epsfig}
\usepackage{amscd}
\usepackage{verbatim}
\usepackage{booktabs}

\setlength{\textwidth}{15.8cm}
\setlength{\oddsidemargin}{0cm}
\setlength{\evensidemargin}{0cm}
\setlength{\topmargin}{-0.5in}
\setlength{\textheight}{9.5in}

\newtheorem{THEOR}{Theorem}[section]

\newtheorem{LEM}[THEOR]{Lemma}

\newtheorem{COR}[THEOR]{Corollary}
\newtheorem{REM}[THEOR]{Remark}

\theoremstyle{definition}

\numberwithin{equation}{section}

\newcommand\Z{\mathbb Z}

\newcommand\C{\mathbb C}
\newcommand{\PP}{{\mathbb P}}
\def\P{\PP}
\newcommand\A{{\mathsf A}}

\newcommand\Ag{\A_g}
\def\ag{\Ag}

\newcommand{\M}{{\mathsf{M}}}

\newcommand{\rgb}{\mathsf{R}_{g,b}}
\newcommand{\pgb}{\mathsf{P}_{g,b}}

\newcommand{\tG}{{\tilde{G}}}

\newcommand{\tg}{{\tilde{g}}}

\newcommand{\tC} {{\tilde{C}}}

\newcommand{\Hom}{\operatorname{Hom}}

\newcommand{\Spec}{\operatorname{Spec}}
\newcommand{\Nm}{\operatorname{Nm}}

\def\i{\mathrm i}

\renewcommand{\phi}{\varphi}
\newcommand{\sx}{\langle}
\newcommand{\xs}{\rangle}
\newcommand{\lra}{\longrightarrow}
\newcommand{\ra}{\rightarrow}

\newcounter{example}[section]

\begin{document}

	\title{Abelian covers and the second fundamental form}

	 \author[P. Frediani]{Paola Frediani} \address{ Dipartimento di
	Matematica, Universit\`a di Pavia, via Ferrata 5, I-27100 Pavia,
	 Italy } \email{{\tt paola.frediani@unipv.it}}

	\begin{abstract}
	We give some conditions on a family of abelian covers of $\P^1$ of genus $g$ curves, that ensure that the family yields a subvariety of $\ag$ which is not totally geodesic, hence it is not Shimura.  As a consequence, we show that for any abelian group $G$, there exists an integer $M$ which only depends on $G$ such that if $g >M$, then the family  yields a subvariety of $\ag$ which is not totally geodesic. We prove then  analogous results for families of abelian covers of $\tC_t \ra \P^1 = \tC_t/\tG$ with an abelian Galois group $\tG$ of even order, proving that under some conditions, if $\sigma \in \tG$ is an involution,  the family of Pryms  associated with the covers $\tC_t \ra C_t= \tC_t/\langle \sigma \rangle$ yields a subvariety of ${\mathsf A}_{p}^{\delta}$ which is not totally geodesic. As a consequence, we show that if $\tG =(\Z/N\Z)^m$ with $N$ even, and $\sigma$ is an involution in $\tG$, there exists an integer $M(N)$ which only depends on $N$ such that, if   $\tg = g(\tC_t) > M(N)$, then the subvariety of the Prym locus in ${\A}^{\delta}_{p}$ induced by any such family is not totally geodesic (hence it is not Shimura). 

		\end{abstract}

	 \thanks{The author was partially supported MIUR PRIN 2017
	``Moduli spaces and Lie Theory'' ,  by MIUR, Programma Dipartimenti di Eccellenza
	(2018-2022) - Dipartimento di Matematica ``F. Casorati'',
	Universit\`a degli Studi di Pavia and by INdAM (GNSAGA)  }

	\maketitle

	\section{Introduction}
	
	In this paper we study families of abelian covers of $\P^1$, in relation with the Coleman-Oort conjecture and with an analogue of this conjecture for the Prym maps of (possibly ramified) double covers. 
	
	Given a family of Galois covers of genus $g$ curves, $C_t \ra C_t/G \cong \P^1$,  the associated family of Jacobians gives a subvariety of $\ag$ and we ask under which conditions this subvariety is a Shimura subvariety of $\ag$.
A Shimura subvariety of $\ag$ is by definition a Hodge locus for the tautological family of principally polarized abelian varieties on $\ag$.

	 Coleman-Oort conjecture predicts that for $g$ sufficiently high, there should not exist positive dimensional Shimura subvarieties of $\ag$ generically contained in the Torelli locus. In \cite{cfgp} (and in \cite{fg}, \cite{fgm} in the ramified case) a similar question was posed about the existence of positive dimensional Shimura subvarieties of the moduli space of polarised abelian varieties of a fixed dimension, which are generically contained in the Prym loci, that is in the closure of the image of the Prym maps of (possibly ramified) double covers. 
	Both in the case of the Torelli map and in the case of Prym maps, there are examples of such Shimura subvarieties for low values of $g$. These constructions all use families of Galois covers (see \cite{dejong-noot}, \cite{rohde}, \cite{moonen-special}, \cite{moonen-oort},  \cite{fgp}, \cite{fpp}, \cite{gm}, \cite{fgs} for the Torelli case, \cite{cfgp}, \cite{fg}, \cite{fgm} for the Prym case). 
	
Shimura subvarieties are totally geodesic, i.e, they are images of totally
geodesic submanifolds of the Siegel space ${\mathsf H}_g$ endowed with the symmetric metric. Recall that a  submanifold of ${\mathsf H}_g$ is totally geodesic if its second fundamental form is identically zero.  
More precisely, by results of Mumford and Moonen an algebraic subvariety of $\ag$ is a Shimura subvariety if and only if it is totally
geodesic and it contains a CM (complex multiplication) point (see \cite{mumford-Shimura} \cite{moonen-special}). The notion of CM point is arithmetic, while the condition of being totally geodesic is related to the locally symmetric geometry of $\ag$ induced by the Siegel space. 	
	
	Let us first explain the main result of this paper in the case of the Torelli morphism. Fix a family of abelian covers of $\P^1$,  $C_t \ra C_t/G \cong \P^1$, where $C_t$ has genus $g$. We show that  there exists an integer $M$ which only depends on $G$ such that if $g >M$, then the family of Jacobians of the curves $C_t$ yields a subvariety of $\ag$ which is not totally geodesic, hence it is not Shimura. 
	
	More precisely, assume we have a family of Galois covers of $\P^1$ with Galois group $G$. With the notations used in \cite[Section 2]{fgp}, denote by $\M_g({\bf{m}}, G,\theta)$ the corresponding subvariety of $\M_g$. Here $\theta: \Gamma_r = \langle \gamma_1, ..., \gamma_r \ | \ \prod_{i=1}^r \gamma_i = 1 \rangle \ra G$ denotes the monodromy of the cover (${\bf m}:= (m_1,...,m_r)$ is such that $m_i$ is the order of $\theta(\gamma_i)$ in $G$). Consider $j: \M_g \ra  \A_g$ the Torelli morphism, which is an orbifold immersion outside the hyperelliptic locus (\cite{oort-st}). The second fundamental form of the Torelli map with respect to the Siegel metric has been studied in \cite{cpt}, \cite{cfg}, \cite{fp}. Its dual at a point $[C] \in \M_g$ corresponding to a non hyperelliptic curve $C$ is a map
	\begin{equation}
	\rho: I_2(K_C) \ra S^2 H^0(C, K_C^{\otimes 2})
	\end{equation}
	where $I_2(K_C)$ is the kernel of the multiplication map $S^2H^0(C, K_C) \ra H^0(C, K_C^{\otimes 2})$, that is, the space of quadrics containing the canonical image of $C$. 

In \cite{moonen-special} in the cyclic case and in \cite[Theorem 3.9]{fgp} for any group $G$, it is proven that if the following condition holds
\begin{gather*}
(*) \ \ \ \ \ dim (S^2H^0(C, K_C))^G = \dim  H^0(C, K_C^{\otimes 2})^G,
\end{gather*}
for a general member $[C] \in \M_g({\bf{m}}, G,\theta)$, then the family of Galois covers yields a Shimura (hence totally geodesic) subvariety of $\A_g$.  Moonen proved in \cite{moonen-special},  using techniques in positive characteristic, that condition $(*)$ is also necessary for a family of cyclic covers of $\P^1$  to give a Shimura subvariety of $\A_g$. This was then generalised in \cite{MZ}, still using positive characteristic techniques, for 1-dimensional families of abelian covers of $\P^1$. Using this sufficient condition $(*)$ and  computer computations, many examples of Shimura subvarieties of $\A_g$ generically contained in the Torelli locus have been constructed in \cite{moonen-special}, \cite{moonen-oort}, \cite{fgp}, \cite{fpp}. Recently in \cite{cgpi} the authors proved that if $g \leq 100$ the examples found in \cite{fgp}, that are all in genus $g \leq 7$, are the only ones satisfying condition $(*)$, thus giving a strong evidence for the Coleman-Oort conjecture (at least for Shimura subvarieties obtained via families of Galois covers of $\P^1$). In \cite{fgs} it is shown that the only families of Galois covers of curves of genus $g' >0$ satisfying condition $(*)$ are the ones found in \cite{fpp} that are all in low genus $g \leq 4$.   
In \cite[Proposition 5.4]{cfg} it is proven that if a family of cyclic covers of $\P^1$ does not satisfy $(*)$ and another condition on the dimension of the eigenspaces for the representation of $G$ on $H^0(C, K_C)$ is satisfied, then the family of cyclic covers gives a subvariety of $\A_g$ which is not  even totally geodesic (hence it is not Shimura).  Here we generalise this result to families of abelian covers of $\P^1$ in Theorem \ref{torelli}. 
Let $C \ra C/G \cong \P^1$ be a general element of a family of abelian covers. Since $G$ is abelian, we have an isomorphism of $G$ with the group of characters  $G^*=\Hom(G,\C^{*})$, hence we identify an element $n$ of $G$ with the corresponding character $\chi \in G^*$ and we denote by $d_n := \dim H^0(C, K_C)_{\chi}$ (see Section 2). 

 We have the following 
\begin{THEOR} (Theorem \ref{torelli})
\begin{enumerate}
\item If there exists an element  $n \in  G \subseteq (\Z/N\Z)^m$ such that $n \neq -n$, $d_n \geq 2$, $d_{-n} \geq 2$, then the subvariety of $\A_g$ induced by the family of abelian covers of genus $g \geq 4$  is not totally geodesic (hence it is not Shimura). 
\item If there exists an element $n  \in  G \subseteq (\Z/N\Z)^m$ of order 2 such that $d_n \geq 3$, then the subvariety of $\A_g$ induced by the family of abelian covers of genus $g \geq 4$ is not totally geodesic (hence it is not Shimura).	 
\end{enumerate}
	
\end{THEOR}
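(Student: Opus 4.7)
The plan is to exploit the dual of the second fundamental form of the Torelli map, $\rho: I_2(K_C) \to S^2 H^0(C, 2K_C)$, together with its $G$-equivariance under the induced actions on both sides. Following the template of \cite[Proposition 5.4]{cfg} in the cyclic case, the criterion for $\M_g(\mathbf{m}, G, \theta)$ to yield a totally geodesic subvariety of $\A_g$ at a general $[C]$ is that $\rho(Q)$ lies in the $G$-invariant part $S^2 H^0(C, 2K_C)^G$ for every $G$-invariant quadric $Q \in I_2(K_C)^G$. Thus, in each case, it suffices to produce such a $Q$ whose image $\rho(Q)$ has a non-trivial component outside the invariants.

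In Case (1), let $\chi \in G^*$ correspond to $n$, and pick linearly independent sections $s_1, s_2 \in H^0(K_C)_\chi$ and $s_1', s_2' \in H^0(K_C)_{-\chi}$. The subspace $H^0(K_C)_\chi \otimes H^0(K_C)_{-\chi}$ sits in $(S^2 H^0(K_C))^G$ with dimension $d_n d_{-n} \geq 4$, and the multiplication map sends it into the single summand $H^0(2K_C)^G = H^0(2K_C)_0$. A dimension count (using $g \geq 4$ and the character-theoretic structure of the cover) shows that this restricted multiplication map has non-trivial kernel, yielding an explicit $G$-invariant quadric $Q = \sum_{i,j} a_{ij}\, s_i \cdot s_j' \in I_2(K_C)^G$. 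In Case (2), $\chi$ has order $2$ so $\chi = -\chi$; then $S^2 H^0(K_C)_\chi$ is $G$-invariant of dimension $\binom{d_n+1}{2} \geq 6$, and a parallel dimension count again produces a $G$-invariant $Q \in I_2(K_C)^G$ inside this symmetric square.

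For both cases, one then evaluates $\rho(Q)$ using the explicit Jacobi-type formula from \cite{cpt,cfg}: for a Beltrami differential $v \in H^1(T_C)$ and $Q = \sum p_k \cdot q_k$, one has $\rho(Q)(v, v) = \sum v(p_k) \cdot v(q_k)$, where $v(\cdot)$ denotes the action via Schiffer variations. Choosing an invariant tangent direction $v \in H^1(T_C)^G$ and exploiting the linear independence of the chosen sections in the specific eigenspaces, one shows $\rho(Q)$ has a non-zero component in a non-trivial isotypic summand of $S^2 H^0(2K_C)$, contradicting total geodesicity.

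The main obstacle is this last step, the non-vanishing of $\rho(Q)$ outside the $G$-invariant summand. In the cyclic case this was accomplished in \cite{cfg} by an explicit eigenvector computation; in the abelian case, since the construction of $Q$ only involves the pair $(\chi, -\chi)$ (Case~1) or the order-$2$ character $\chi$ alone (Case~2), the argument is expected to generalise verbatim once one tracks the full character decomposition of $S^2 H^0(K_C)$ and of $H^0(2K_C)$. The hypothesis $g \geq 4$ enters to guarantee that $C$ is non-hyperelliptic, so that $j$ is an orbifold immersion and $\rho$ is well-defined, and to provide enough room for the quadric construction.
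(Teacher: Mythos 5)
Your overall strategy --- produce a $G$-invariant quadric $Q \in I_2(K_C)^G$ and show that the (dual of the) second fundamental form does not vanish on it --- is the same as the paper's, but the proposal has several genuine gaps. First, the criterion is stated backwards: since $\rho$ is $G$-equivariant, $\rho(Q)$ for $G$-invariant $Q$ \emph{automatically} lies in $(S^2H^0(C,K_C^{\otimes 2}))^G$, so ``a non-zero component outside the invariants'' can never occur and is not the obstruction to being totally geodesic. What must be shown is that $\rho(Q)(v\odot v)\neq 0$ for some tangent vector $v\in H^1(T_C)^G$ to the family. Second, the dimension count you invoke to produce $Q$ does not work as stated: the target $H^0(C,K_C^{\otimes 2})^G$ has dimension $r-3$, which in general far exceeds $d_n d_{-n}\geq 4$, so no kernel element follows from comparing dimensions. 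The paper instead uses the explicit shape of the eigenspace bases, $\omega_{n,\nu}=x^{\nu}\omega_{n,0}$, to write down $Q=\omega_1\odot(x\omega_3)-(x\omega_1)\odot\omega_3$ in case (1) and $Q=\omega_1\odot(x^2\omega_1)-(x\omega_1)\odot(x\omega_1)$ in case (2); these are visibly in the kernel of multiplication with no counting needed.

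Third --- and this is the heart of the proof, which you explicitly defer --- the non-vanishing step is missing and your ``Jacobi-type formula'' $\rho(Q)(v,v)=\sum v(p_k)v(q_k)$ is not the correct evaluation formula. The paper takes $v=\sum_{i}\xi_{p_i}$, the sum of Schiffer variations over a generic fibre $\pi^{-1}(t)$; the off-diagonal terms $\rho(Q)(\xi_{p_i}\odot\xi_{p_j})$, which are proportional to $Q(p_i,p_j)\,\eta_{p_i}(p_j)$, vanish because $Q(p,q)$ contains the factor $x(p)-x(q)$ and all $p_i$ lie over the same $t$; what survives is $d\,\rho(Q)(\xi_{p_1}\odot\xi_{p_1})$, a multiple of the second Gaussian map $\mu_2(Q)(p_1)$, and an explicit local computation gives $\mu_2(Q)=x'\cdot\mu_1(\omega_1\wedge\omega_3)$ (resp.\ $-(x')^2f_1^2$), which is non-zero for generic $t$. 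Without this mechanism the proof does not close. Finally, $g\geq 4$ does \emph{not} force $C$ to be non-hyperelliptic --- abelian covers of $\P^1$ include hyperelliptic families in every genus --- so the hyperelliptic case must be treated separately, as the paper does via the second fundamental form of $j_h:{\mathsf{HE}}_g\to\A_g$ and, if necessary, by enlarging $G$ by the hyperelliptic involution; your proposal omits this case entirely.
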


 As a consequence we prove the following 

	\begin{THEOR}
\label{A} (Corollaries \ref{Z2}, \ref{general}, \ref{Zp})

Assume $G \subseteq (\Z/N\Z)^m$, $N \geq 3$, set $d:= \#G$, $g \geq 4$. Assume that we have a family of $G$-covers of $\P^1$ yielding a totally geodesic subvariety of $\A_g$.  Then $r \leq 2Nm$ and $ g \leq 1 + d(m(N-1)-1).$


If $G = (\Z/2\Z)^m$ ($g \geq 4$) and we have a family of $G$-covers of $\P^1$ yielding a totally geodesic subvariety of $\A_g$,  then $m \leq 6$, $r \leq 6m\leq 36$, and $g \leq 1 + 2^{m-1}(3m-2) \leq 513.$

If $G = (\Z/p\Z)^m$, with $p$ a prime number, $p \geq 3$, $g \geq 4$. Assume that we have a family of $G$-covers of $\P^1$ yielding a totally geodesic subvariety of $\A_g$. Then $m \leq 2p$, $r \leq 4 p^2$ and $ g \leq 1 + p^{2p}(2p(p-1) -1).$

\end{THEOR}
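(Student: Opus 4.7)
The plan is to derive these bounds via the contrapositive of Theorem \ref{torelli}, combined with the Chevalley--Weil formula and elementary double counting on the monodromy data. Suppose the family of $G$-covers yields a totally geodesic subvariety of $\A_g$; then by Theorem \ref{torelli} we must have $\min(d_n,d_{-n})\le 1$ for every $n\in G$ with $n\ne -n$, and $d_n\le 2$ for every $n\in G$ of order two.

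The first step is to translate these dimension inequalities into bounds on the branch data. By the Chevalley--Weil (Eichler trace) formula,
\[
d_\chi = -1 + \sum_{i=1}^r e_i(\chi),
\]
for each non-trivial character $\chi$ of $G$, where $e_i(\chi)\in[0,1)$ is the normalised angle of $\chi(g_i)$, vanishing exactly when $\chi(g_i)=1$; otherwise $e_i(\chi)\ge 1/\mathrm{ord}(g_i)\ge 1/N$, since the exponent of $G\subseteq(\Z/N\Z)^m$ divides $N$. Summing with the conjugate yields $d_\chi+d_{-\chi}=r(\chi)-2$, where $r(\chi):=\#\{i:\chi(g_i)\ne 1\}=r(-\chi)$, and for $\chi$ of order two this specialises to $d_\chi=r(\chi)/2-1$. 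Consequently $d_\chi\le 1$ forces $\sum_i e_i(\chi)\le 2$, whence $r(\chi)\le 2N$; while $d_\chi\le 2$ for order-two $\chi$ yields $r(\chi)\le 6$. Since $2N\ge 6$ when $N\ge 3$, both constraints combine into the uniform bound $r(\chi)\le 2N$ for every non-trivial character.

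To deduce $r\le 2Nm$, I would apply the uniform bound to the $m$ coordinate characters $\chi_1,\dots,\chi_m$ of $(\Z/N\Z)^m$ restricted to $G$: every branch monodromy $g_i\in G\setminus\{0\}\subseteq(\Z/N\Z)^m$ has at least one non-zero coordinate, so each $i$ is counted in $r(\chi_j)$ for some $j$, giving $r\le\sum_j r(\chi_j)\le 2Nm$. Substituting into Riemann--Hurwitz
\[
2g-2=d\Bigl(r-2-\sum_{i=1}^r\frac{1}{m_i}\Bigr)\le d\Bigl(\frac{r(N-1)}{N}-2\Bigr),
\]
where the inequality uses $m_i\mid N$ hence $1/m_i\ge 1/N$, yields $g\le 1+d(m(N-1)-1)$, as required.

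The two remaining corollaries specialise $N=2$ and $N=p$. In both cases the above steps go through verbatim: for $G=(\Z/2\Z)^m$ all non-trivial characters are of order two so $r(\chi)\le 6$, giving $r\le 6m$; for $G=(\Z/p\Z)^m$ with $p\ge 3$ one has $r(\chi)\le 2p$ for every non-trivial $\chi$, giving $r\le 2pm$. The stated genus bounds then follow by substitution into Riemann--Hurwitz (with $m_i=2$ forced throughout in the $(\Z/2\Z)^m$ case). The sharper bounds $m\le 6$ and $m\le 2p$ arise from combining the uniform estimate $r(\chi)\le 2N$ with the double-counting identity
\[
\sum_{\chi\ne 1}r(\chi)=|G|\sum_{i=1}^r\Bigl(1-\frac{1}{m_i}\Bigr),
\]
the minimal-generation constraint $r\ge m+1$, and the standing hypothesis $g\ge 4$. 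The hard part is precisely this last step, which amounts to showing that no linear code over $\mathbb{F}_p$ (resp.\ $\mathbb{F}_2$) of dimension $m$ whose non-zero codewords all have Hamming weight at most $2N$ can exist once $m$ exceeds the stated threshold; the routine counting alone gives bounds of the form $m\le r-1$ that are slightly weaker than claimed, and sharpening to the exact constants $6$ and $2p$ requires a careful combinatorial analysis of the monodromy matrix viewed as the generator of such a code.
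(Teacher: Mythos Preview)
Your argument for the bounds $r\le 2Nm$ and $g\le 1+d(m(N-1)-1)$ is correct and essentially identical to the paper's proof of Corollary~\ref{general}: you apply Theorem~\ref{torelli} to the $m$ coordinate characters, bound the number $\beta_i=r(\chi_i)$ of nonzero entries in each row of the monodromy matrix $A$ by $2N$, and then use that every column of $A$ is nonzero to conclude $r\le\sum_i\beta_i\le 2Nm$. The specialisations to $G=(\Z/2\Z)^m$ and $G=(\Z/p\Z)^m$ likewise match the paper for the $r$ and $g$ bounds.

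The genuine gap is in your treatment of the bounds $m\le 6$ and $m\le 2p$. You correctly identify that this is the non-routine step, but you overcomplicate it: no coding-theoretic argument is needed, and the double-counting identity you write down does not help (combining $m\le r-1$ with $r\le 6m$ gives nothing). The paper's trick is much simpler. Since the columns of $A$ generate $G=(\Z/p\Z)^m$ (resp.\ $(\Z/2\Z)^m$), after applying an automorphism of $G$ one may assume that the standard basis vectors $e_1,\dots,e_m$ occur among the columns. Now apply the constraint from Theorem~\ref{torelli} not to the coordinate characters but to the single character $n=(1,\dots,1)$: this character is nontrivial on each $e_j$, so the corresponding vector $n\cdot A=(\alpha_1,\dots,\alpha_r)$ has at least $m$ nonzero entries, namely $\alpha_j=1$ at the columns $e_1,\dots,e_m$. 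For $(\Z/2\Z)^m$ the character $n$ has order two, so $d_n\le 2$ forces $-1+\tfrac{m}{2}\le 2$, i.e.\ $m\le 6$. For $(\Z/p\Z)^m$ with $p\ge 3$ odd, $n$ has order $p>2$, so either $d_n\le 1$ or $d_{-n}\le 1$; estimating the contribution of those $m$ columns alone gives respectively $-1+m(1-\tfrac{1}{p})\le 1$ or $-1+\tfrac{m}{p}\le 1$, and the weaker of the two yields $m\le 2p$.
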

	
Note that in \cite[Theorem 5.2]{moh-2} it is proven, using characteristic $p$ techniques, that if there exists $n \in G$ such that $\{d_n, d_{-n}\} \neq \{0, r-2\}$, where $r$ is the number of branch points of the cover,  and $d_n + d_{-n} \geq r-2$, then the subvariety of $\A_g$ induced by the family of abelian covers  is not Shimura. \\

	In the second part of the paper we show similar statements for families of Galois covers yielding subvarieties generically contained in the Prym loci of possibly ramified double covers. 
	
	Denote by $ { \rgb}$ the moduli space of isomorphism classes of triples $[(C, \alpha, B)]$ where $C$ is a smooth complex projective curve of genus $g$, $B$ is a reduced effective divisor of degree $b$ on $C$ and $\alpha \in Pic(C)$ is such that $\alpha^2={\mathcal O}_{C}(B)$. A point  $[(C, \alpha, B)] \in {\rgb}$ determines a double cover of $C$, $f:\tC\to C$ branched on $B$, with $\tC=\Spec({\mathcal O}_{C}\oplus \alpha^{-1})$.
	
	The Prym variety $P(C, \alpha, B)$ (also denoted by $P(\tC,C)$) associated to $[(C,\alpha, B)]$ is the connected component containing the origin of the kernel of the norm map $\Nm_{f}: J\tC \to JC$.
	When $b>0$, $\ker \Nm_{f}$ is connected. The variety  $P(C, \alpha, B)$ is a  polarised abelian variety of dimension $g-1+\frac{b}{2}$. In fact, if we denote by $\Xi$ the restriction to $P(\tC,C)$ of the principal polarisation on $J\tC$,  if $b=0$, $\Xi$ is twice a principal polarisation, hence we consider on $P(\tC, C)$ this principal polarisation. If $b>0$, we endow  $P(\tC, C)$ with the polarisation  $\Xi$ which  is of type $\delta=(1,\dots, 1, \underbrace{2,\dots,2}_{g\text{ times}})$.

Denote by $\A^\delta_{g-1+\frac{b}{2}}$  the moduli space of abelian varieties of dimension $g-1+\frac{b}{2} $ with a polarization of type $\delta$, then we consider the Prym map 
	\[ \pgb: \rgb \lra \A^\delta_{g-1+\frac{b}{2}}, \quad [(C,\alpha,B)] \longmapsto  [(P(C,\alpha,B), \Xi)].\]

	The dual of the differential of the Prym map $\pgb$ at a generic point $[(C, \alpha, B)]$  is given by the multiplication map
	\begin{equation}
	\label{dp}
	(d\pgb)^* : S^2H^0(C, {\omega}_C \otimes \alpha) \to H^0(C, \omega^2_C\otimes\mathcal O_C(B))
	\end{equation}
	
	The multiplication map is surjective if  $\dim \rgb \leq \dim {\A}^{\delta}_{g-1+\frac{b}{2}},$ (\cite{lange-ortega}).  So the Prym map $\pgb$ is generically finite, if and only if $\dim \rgb \leq \dim {\A}^{\delta}_{g-1+\frac{b}{2}},$ that is 
	if either $b\geq 6$ and $g\geq 1$, or $b=4$ and $g\geq 3$, $b=2$ and $g\geq 5$,  $b=0$ and $g \geq 6$.
	
	For $b =0$  the Prym map is generically injective for $g \geq 7$  (\cite{friedman-smith}, \cite{kanev-global-Torelli}). 
	If $b>0$,  in  \cite{pietro-vale},  \cite{mn}, \cite{no}, it is proven that $\pgb$  is generically injective in all the 
	cases except for $b=4$, $g =3$, when the degree is 3 (\cite{nagarama}, \cite{bcv}).  Moreover, a global Prym-Torelli theorem was recently proved for all $g$ and $b\ge 6$ (\cite{ikeda} for $g=1$ and \cite{no1} for all $g$).

	Let $\tG$ be a group containing a central  involution $\sigma$. Assume we have a family of Galois covers $\psi_t: \tC_t \ra \P^1 = \tC_t/\tG$. Then we have an exact sequence
	$ 0 \rightarrow  \langle \sigma \rangle \ra \tG \ra G \ra 0,$ and a commutative diagram  
	
	\begin{equation}
	\label{tc-c}
	\begin{tikzcd}[row sep=tiny]
	\tC_t \arrow{rr}{\phi_ t} \arrow{rd}{\psi_t} & &  \arrow{ld }{\pi_t} C_t   = \tC_t /\sx \sigma\xs  \\
	& \P^1 &
	\end{tikzcd}
	\end{equation}	

	For a general element of the family $\tC_t$, denote by $V:= H^0(\tC_t, K_{\tC_t}) \cong V_+ \oplus V_-$, where $V_+$ is the set if $\sigma$-invariants elements, and $V_-$ is the set if $\sigma$-anti-invariants elements. The double cover $\psi_t: \tC_t \ra C_t = \tC_t/\sx \sigma\xs$ corresponds to a triple $(C_t, \alpha_t, B_t)$, where $B_t$ is a reduced effective divisor of degree $b$ on $C_t$ and $\alpha_t$ is a line bundle on $C_t$ such that $\alpha_t^2 = {\mathcal O}_{C_t}(B_t)$. Then $V_+ \cong H^0(C_t, K_{C_t})$ and $V_- \cong H^0(C_t, K_{C_t} \otimes \alpha_t) \cong H^{1,0}(P(\tC_t,C_t))$. 
	
	For the precise construction of the subvarieties of ${\A}^{\delta}_{g-1+\frac{b}{2}}$ generically contained in the Prym loci given by families of Galois covers, see  \cite[ \S 3]{fgm}. 	Given a family of Galois covers as above, observe that multiplication map $$ m : S^2 V \to H^0(\tC, K_{\tC_t}^{\otimes 2} ),$$ which is the dual of the differential of the Torelli map
	$\widetilde{j} : \M_\tg \to \A_\tg$ at the point $[\tC_t] \in \M_\tg$, is $\tG$-equivariant, hence $m$ maps $(S^2V)^\tG $ to $H^0(\tC, K_{\tC_t}^{\otimes 2})^\tG$. We have the following isomorphism:
	$$(S^2V)^\tG = (S^2(V_+))^{\tG} \oplus (S^2 (V_-))^{\tG}. $$
	
	The codifferential of the restriction of  Prym map  to the subvariety of $\rgb$ given by our family of Galois covers at the point $[(C_t, \alpha_t, B_t)]$  is the restriction of the multiplication map $m$ to $(S^2 (V_-))^{\tG}$, that we still denote by $
	m : (S^2 (V_-))^{\tG} \lra H^0(\tC, K_{\tC_t}^{\otimes 2})^\tG$ (see \cite{fg}, \cite{fgm}).

	In \cite[Theorem~3.2]{fg}, which is a generalisation of Theorems 3.2 and 4.2 in \cite{cfgp}, it is shown that if the map $m : (S^2 (V_-))^{\tG} \lra H^0(\tC, K_{\tC_t}^{\otimes 2})^\tG$ is an isomorphism, then the family of Pryms yields a Shimura subvariety of $\A^\delta_{g-1+\frac{b}{2}}$. Using this criterion,  in \cite{cfgp}, \cite{fg}, \cite{fgm} many examples of Shimura subvarieties generically contained in the Prym loci have been constructed. 
	
A natural question is to ask whether the above condition is also necessary for such families of abelian covers to yield Shimura subvarieties of $\A^\delta_{g-1+\frac{b}{2}}$ generically contained in the Prym loci. 

We give a partial answer to this question in the case of abelian covers in Theorem \ref{prym}, which is an analogue of Theorem \ref{torelli} in the Prym case. Notice that Theorem \ref{prym} improves the results obtained in \cite{moh} by different techniques. 

As a consequence we show the following 

\begin{THEOR}
\label{B} (Corollaries \ref{Z2prym}, \ref{general-prym}, and Remark \ref{remark})

Assume $\tG =(\Z/N\Z)^m$ with $N$ even, and let $\sigma$ be an involution in $\tG$. Consider a family of abelian covers of $\P^1$, $\tC_t \ra \tC_t/\tG \cong \P^1$, with $g(\tC_t) = \tg \geq 4$. Assume that the covers $\tC_t \ra C_t = \tC/\langle \sigma \rangle$ have $b$ ramification points and denote by $g$ the genus of  $C_t$. \\
If the multiplication map $m : (S^2 (V_-))^{\tG} \lra H^0(\tC, K_{\tC_t}^{\otimes 2})^\tG$ is surjective at a general element of the family of covers, then there exists an integer $M(N)$ which only depends on $N$ such that, if   $\tg > M(N)$, then the subvariety of the Prym locus in ${\A}^{\delta}_{g-1+\frac{b}{2}}$ induced by any such family is not totally geodesic (hence it is not Shimura).

\end{THEOR}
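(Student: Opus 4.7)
The statement combines Corollaries \ref{Z2prym} and \ref{general-prym} together with Remark \ref{remark}, and we plan to derive all three from Theorem \ref{prym}, the Prym analog of Theorem \ref{torelli}. Recall that Theorem \ref{prym} asserts that the Prym subvariety fails to be totally geodesic whenever either (a) some $n\in\tG$ with $n\neq -n$ satisfies $\min(d_n^-, d_{-n}^-)\geq 2$, or (b) some involution $n\in\tG$ satisfies $d_n^-\geq 3$; here $d_n^-$ denotes the dimension of the character eigenspace of $V_-=H^0(\tC_t, K_{\tC_t})_-$ attached to $n$.

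Arguing by contradiction, assume the subvariety is totally geodesic. Then both conditions above must fail, so for every non-involution $n\in\tG$ at least one of $d_n^-, d_{-n}^-$ is $\leq 1$, and for every involution $n\in\tG$ we have $d_n^-\leq 2$. These pointwise bounds, combined with the Chevalley--Weil description of the $d_n^-$ in terms of the local monodromy data $(m_1,\dots,m_r)$ recalled in Section 2, restrict the admissible ramification data of the cover $\tC_t\to\P^1$. The hypothesis that the multiplication map $(S^2V_-)^{\tG}\lra H^0(\tC_t, K_{\tC_t}^{\otimes 2})^{\tG}$ is surjective imposes the further inequality $\dim(S^2V_-)^{\tG}\geq \dim H^0(\tC_t, K_{\tC_t}^{\otimes 2})^{\tG}$. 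Expanding both sides as sums indexed by $\tG^*$ and inserting the eigenspace bounds, this inequality forces the rank of $\tG=(\Z/N\Z)^m$ to satisfy $m\leq m_0(N)$ for a function depending only on $N$, in analogy with the rank bounds $m\leq 6$ (for $N=2$, Corollary \ref{Z2}) and $m\leq 2p$ (for $N=p$ odd prime, Corollary \ref{Zp}) obtained in the Torelli case.

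Once $m$ is bounded in terms of $N$, $|\tG|=N^m$ is bounded in terms of $N$, and summing the eigenspace bounds over $\tG^*$ yields $\dim V_-\leq M_2(N)$. A parallel argument for the quotient cover $C_t\to\P^1$, applying Theorem \ref{torelli} (and the associated rank bounds from Corollaries \ref{Z2}, \ref{general}, \ref{Zp}) to the induced action of $G=\tG/\langle\sigma\rangle$ on $V_+\cong H^0(C_t, K_{C_t})$, gives $\dim V_+\leq M_3(N)$. Setting $M(N):=M_2(N)+M_3(N)$, we conclude $\tg=\dim V_+ + \dim V_-\leq M(N)$, contradicting $\tg>M(N)$. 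The main technical obstacle is the rank bound in the second paragraph: the eigenspace constraints from Theorem \ref{prym} alone bound $\dim V_-$ only in terms of $|\tG|$, not in terms of $N$, and one must exploit the surjectivity of the multiplication map on $(S^2V_-)^{\tG}$ in an essential way to force $m$ itself to be bounded; the three separate cases (Corollaries \ref{Z2prym}, \ref{general-prym}, and Remark \ref{remark}) correspond to specific regimes of $N$ in which this rank bound takes an explicit form.
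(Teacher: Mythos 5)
Your overall architecture (argue by contradiction, extract eigenspace bounds from Theorem \ref{prym}, then convert them into a numerical bound on $\tg$) matches the paper's, but two of your key steps do not work as described. First, the rank bound $m\le m_0(N)$. You propose to extract it from the inequality $\dim(S^2V_-)^{\tG}\ge\dim H^0(\tC,K_{\tC}^{\otimes2})^{\tG}$ coming from surjectivity of the multiplication map. This cannot succeed: the right-hand side equals $r-3$ (the dimension of the family), so the inequality is a \emph{lower} bound on $\dim(S^2V_-)^{\tG}$, which only becomes easier to satisfy as the number of characters $N^m$ grows; it imposes no upper bound on $m$. The Torelli-case bounds $m\le6$ and $m\le2p$ that you cite as your model come from a completely different mechanism: after normalizing so that $e_1,\dots,e_m$ occur among the columns of $A$, one applies Theorem \ref{torelli} to the single character $n=(1,\dots,1)$, whose row $n\cdot A$ has at least $m$ nonzero entries, forcing $d_n$ or $d_{-n}$ to grow linearly in $m$. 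In the Prym setting the analogous step needs care because Lemma \ref{dimeigspace} gives $(V_-)_n=V_n$ only when $n_1+\cdots+n_k$ is odd --- this is precisely the role of Remark \ref{remark}, which you list but never use: it normalizes $\sigma=(\frac{N}{2},\dots,\frac{N}{2})$ so that $k=m$ and in particular $(V_-)_{e_i}=V_{e_i}$. In fact the paper's own Corollaries \ref{Z2prym} and \ref{general-prym} do not bound $m$ at all: they apply Theorem \ref{prym} only to the characters $e_i$, deduce that each row of $A$ has at most $6$ (resp.\ $2N$) nonzero entries, hence $r\le 6m$ (resp.\ $r\le 2Nm$), and then bound $\tg$ by Riemann--Hurwitz, obtaining $\tg\le 1+d(m(N-1)-1)$ with $d=N^m$ --- a bound that still depends on $m$. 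So you have correctly sensed that an extra bound on $m$ is needed for the statement as phrased, but the source you propose for it is not viable; the viable source is Theorem \ref{prym} applied to a character such as $(1,\dots,1)$ (or $(1,\dots,1,0)$ when the coordinate sum must be made odd).

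Second, your bound on $\dim V_+$ is illegitimate. You apply Theorem \ref{torelli} to the quotient family $C_t\to\P^1$ with group $G=\tG/\langle\sigma\rangle$, but the only hypothesis available is that the family of \emph{Pryms} is totally geodesic in $\A^\delta_{g-1+\frac{b}{2}}$; this says nothing about whether the family of Jacobians $JC_t$ is totally geodesic in $\A_g$, which is what Theorem \ref{torelli} addresses. The second fundamental form controlled by the Prym hypothesis is evaluated only on quadrics in $(S^2V_-)^{\tG}$, so no constraint on the $V_+$-eigenspaces can be extracted this way. The paper sidesteps the issue entirely: once $r\le 2Nm$ is known, Riemann--Hurwitz applied to $\tC_t\to\P^1$ gives $2\tg-2\le-2d+dr(1-\frac{1}{N})$ directly, with no need for the decomposition $\tg=\dim V_++\dim V_-$.
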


We remark that the condition on the surjectivity of the  multiplication map 
$$m : (S^2 (V_-))^{\tG} \lra H^0(\tC, K_{\tC_t}^{\otimes 2})^\tG$$  at a general element of the family of covers is automatically satisfied if $b \geq 6$ thanks to the Prym-Torelli theorem proved in \cite{no1}, \cite{ikeda}. 

The technique used to prove the main Theorems \ref{torelli} and \ref{prym} is the computation of the second fundamental forms of the Torelli map and of the Prym maps on some particular quadrics  invariant under the action of the Galois group.  The fact that the Galois group is abelian allows to explicitly construct such quadrics. Then, the techniques developed in \cite{cpt}, \cite{cfg} for the second fundamental form of the Torelli map and in \cite{cf1}, \cite{cf2} for the second fundamental form of the Prym map, allow to compute these second fundamental forms on the quadrics that we have constructed and show that they do not vanish. A similar technique in the case of cyclic groups, has been used in \cite{fp} to show that the bielliptic and the bihyperelliptic loci are not totally geodesic.

The structure of the paper is as follows. In section 2 we recall the construction of abelian covers of $\P^1$ following \cite{MZ}, \cite{W}.  In section 3 we prove the main results in the case of the Torelli map and we finish giving one example. In section 4 we prove the main results in the case of the Prym maps and we conclude giving some examples.

		\section*{Acknowledgements} I would like to thank Alessandro Ghigi for useful discussions and  Roberto Pignatelli for suggestions that improved the estimates given in Corollaries \ref{Z2}, \ref{Zp}.

	\section{Abelian covers of $\P^1$}
	\label{Abelian covers of line}
	
	In this section, we recall the construction of abelian covers of $\P^1$ following \cite{MZ}, \cite{W} (see also \cite{P2}, \cite{fgm}).

	Consider an $m\times r$ matrix $A=(r_{ij})$ whose entries $r_{ij}$ are in $\Z/N\Z$ for some $N\geq 2$ such that the sum of the columns of $A$ is zero  in $(\Z/N\Z)^m$ and all the columns are different from zero. 
Denote by $\widetilde{r}_{ij}$ the lift of $r_{ij}$ to $\Z \cap [0,N)$ and take $t_1,...,t_r\in \C$, $t_i \neq t_j$, $\forall i \neq j$. 
	Let $\overline{\C(x)}$ be the algebraic closure of $\C(x)$. For each $i=1,\dots,m,$ choose a function $w_{i}\in\overline{\C(x)}$ with
	\begin{equation}\label{equation abelian}
	w_{i}^{N}=\prod_{j=1}^{r}(x-t_{j})^{\widetilde{r}_{ij}}\quad\text{for }i=1,\dots, m,
	\end{equation}
	in $\C(x)[w_{1},\dots,w_{m}]$. 
		The equations  \eqref{equation abelian} give in general a singular affine curve and we take its normalization.  Notice that the cover given by \eqref{equation abelian} is not ramified over the infinity and the local monodromy around the branch point $t_{j}$ is given by the column vector $(r_{1j},\dots, r_{mj})^{t}$. 
	Hence the order of ramification over $t_{j}$ is $\frac{N}{\gcd(N,\widetilde{r}_{1j},\dots,\widetilde{r}_{mj})}$. The covering has the Galois group which is given by the subgroup $G$ of $(\Z/N\Z)^m$ generated by the columns of $A$. 
	So by the Riemann-Hurwitz formula, we compute the genus of the curve as follows: 
		\begin{equation}
	g=1+d\left(\frac{r-2}{2}-\frac{1}{2N}\sum_{j=1}^{r}\gcd(N,\widetilde{r}_{1j},\dots,\widetilde{r}_{mj})\right),
	\end{equation}
	where $d$ is the cardinality of $G$.  Denote by $G^*=\Hom(G,\C^{*})$ the group of the characters of $G$. 
	Consider a Galois covering $\pi: C\rightarrow \P^{1}$ with Galois group $G$. 
	The group $G$ acts on the sheaves $\pi_{*}(\mathcal{O}_C)$ and $\pi_{*}(\C)$ via its characters and we get corresponding eigenspace decompositions 
	$\pi_{*}(\mathcal{O}_C)=\bigoplus_{\chi \in G^*} \pi_{*}(\mathcal{O}_C)_{\chi}$ and $\pi_{*}(\C)=\bigoplus_{\chi \in G^*}\pi_{*}(\C)_{\chi}$. 
	Set $L^{-1}_{\chi}=\pi_{*}(\mathcal{O}_{C})_{\chi}$ and let $\C_{\chi}= \pi_{*}(\C)_{\chi}$ denote the eigensheaves corresponding to the character $\chi$. Then 
	$L_{\chi}$ is a line bundle and $\C_{\chi}$ is a local system of rank 1 outside of the branch locus of $\pi$. 
	

	Denote by $l_{j}$ be the $j$-th column of the matrix $A$. The group $G$ is the subgroup of $(\Z/N\Z)^m$ generated by the columns of $A$, hence  each column $l_{j}$ is an element of $G$. Since $G$ is a finite abelian group, then the character group $G^*=\Hom(G,\C^{*})$ is isomorphic to $G$. In fact, if $G=\Z/N\Z$ is cyclic, then  
	we can fix an isomorphism between $\Z/N\Z$ and the group of $N$-th roots of unity in $\C^{*}$ via $1\mapsto e^{\frac{2\pi \i}{N}}$.  Therefore $G^*$ is isomorphic to the group of $N$-th roots of unity via $\chi\mapsto \chi(1)$.  In general, the abelian group $G$ is a product of finite cyclic groups, so this isomorphism extends to an isomorphism $G \xrightarrow{\sim} G^*$. 
		Fix a character $\chi$ of $G$, then $\chi(l_{j})\in \C^{*}$ and since $G$ is finite, $\chi(l_{j})$ is a root of unity. Hence there exists a unique integer  $\alpha_{j} \in [0,N)$ such that $\chi(l_{j})=e^{\frac{2\pi\i\alpha_{j}}{N}}$. 
	Equivalently, we can obtain $\alpha_j$ as follows: let $n\in G\subseteq (\Z/N\Z)^{m}$ be the element corresponding to $\chi$ under the above isomorphism. 
	We see $n$ a row vector. Then  $(\alpha_{1},\dots,\alpha_{r})= n\cdotp A$. 
	
	 Denote by $\widetilde{n}$ the lift of $n$ to $(\Z\cap [0,N))^{m}$ and set $\widetilde{n}\cdotp \widetilde{A}=(\overline{\alpha}_{1},\dots,\overline{\alpha}_{r})$, where $\tilde{A}$ is the matrix with entries given by the $\tilde{r}_{ij}$'s. This means that $\overline\alpha_j = \sum_{i=1}^m n_i \tilde{r}_{ij} \in \Z$ (notice that $\overline\alpha_j$ is not necessarily in $\Z\cap [0,N)$).
		
	Let us denote by $K_C$ the canonical sheaf of $C$. 
	The sheaf $\pi_{*}(K_C)_{\chi}$ decomposes according to the action of $G$. Let $\chi$ be the character associated to an element $n \in G$. Then we have: 
		$L_{\chi}=\mathcal{O}_{\P^{1}}(\sum_{j=1}^{r}\langle\frac{\alpha_{j}}{N}\rangle)$, 
		where $\langle x\rangle$ is the fractional part of the real number $x$ and
		\begin{equation}
		\label{formula}
		\pi_{*}(K_C)_{\chi}= K_{\P^1} \otimes L_{\chi}^{-1}=\mathcal{O}_{\P^1}\left(-2+\sum_{j=1}^{r}\left\langle -\frac{\alpha_{j}}{N}\right\rangle\right).
		\end{equation}
			For a proof of this, see  \cite[Lemma 4.2]{fgm}, \cite[Proposition 1.2]{P2}.  
Consider now an abelian group $G\subseteq (\Z/N\Z)^{m}$  and a $G$-abelian cover given by the equations \eqref{equation abelian}. 
	Let $n\in G$ be the element $(n_1,\dots, n_m)\in (\Z/N\Z)^{m}$ under the inclusion $G\subseteq (\Z/N\Z)^{m}$ with $n_i\in\Z\cap[0,N)$. 
	By \eqref{formula}, we have 
	\begin{equation}
	\label{dn}
	d_n:= \dim H^0(C,K_{C})_{n}=-1+\sum_{j=1}^{r}\langle-\frac{\alpha_{j}}{N}\rangle.
	\end{equation}
	A basis for  $H^0(C, K_{C})$ is given by the forms 
	\begin{equation}
	\label{forms} 
	\omega_{n,\nu}=x^{\nu} w_{1}^{n_1}\cdots w_{m}^{n_m}\prod_{j=1}^{r} (x-t_j)^{\lfloor -\frac{\overline\alpha_j}{N}\rfloor}dx.
	\end{equation}
	Here $\overline\alpha_j$ is as above and $0\leq\nu\leq d_{n}-1=-2+\sum_{j=1}^{r}\langle-\frac{\alpha_{j}}{N}\rangle$. 
	The fact that these elements give a basis of $H^0(C, K_{C})$ is proven in \cite[Lemma 5.1]{MZ}.

To construct a family of abelian  covers of $\P^1$, we fix a matrix $A$ as above and we let the points $(t_1,...,t_r)$ vary in $Y_r:= \{(t_{1},\dots,t_{r})\in(\mathbb{A}_{\C}^{1})^{r}\mid t_{i}\neq t_{j}, \  \forall i\neq j \}$. 
	Over this affine open set we define a family of abelian covers of $\P^1$ by the equation \eqref{equation abelian}. Clearly the 
	 branch points are $(t_{1},\dots,t_{r})\in Y_r$. Varying the branch points we get a family $f:\tilde{\mathcal C}\to Y_r$ of smooth projective curves whose fibers $C_t$ are the abelian covers of $\P^1$ introduced above.  Denote by $ \M_{g}(G, A)  \subset \M_g$ the corresponding subvariety in $\M_g$. For more details on the construction of families of Galois covers see e.g. \cite{fgp}, \cite{baffo-linceo}, \cite{clp2},  \cite{fgm}.

	\section{Totally geodesic subvarieties in the Torelli locus}
	\label{torelli}
	
	Assume we have a family of Galois covers of $\P^1$ with Galois group $G$. With the notation used in \cite[Section 2]{fgp}, denote by $\M_g({\bf{m}}, G,\theta)$ the corresponding subvariety of $\M_g$. Here $\theta: \Gamma_r = \langle \gamma_1, ..., \gamma_r \ | \ \prod_{i=1}^r \gamma_i = 1 \rangle \ra G$ denotes the monodromy of the cover (${\bf m}:= (m_1,...,m_r)$ is such that $m_i$ is the order of $\theta(\gamma_i)$ in $G$). Consider $j: \M_g \ra  \A_g$ the Torelli morphism, which is an orbifold immersion outside the hyperelliptic locus. We endow $\A_g$ with the Siegel metric, that is the orbifold metric induced on $\A_g$ by the symmetric metric on the Siegel space ${\mathsf H_g}$. 

Recall that an algebraic subvariety of $\A_g$ is  totally geodesic if it is the image of  a totally geodesic submanifold of the Siegel space ${\mathsf H}_g$ endowed with the symmetric metric. A  submanifold of ${\mathsf H}_g$ is totally geodesic if its second fundamental form is identically zero.  
Moreover,  an algebraic subvariety of $\ag$ is a Shimura subvariety if and only if it is totally
geodesic and it contains a CM (complex multiplication) point (see \cite{mumford-Shimura} \cite{moonen-special}). Hence, if an algebraic subvariety of $\A_g$ is not totally geodesic, it is not Shimura. 	
	
	The second fundamental form of the Torelli map with respect to the Siegel metric has been studied in \cite{cpt}, \cite{cfg}, \cite{fp}. Its dual at a point $[C] \in \M_g$ corresponding to a non hyperelliptic curve $C$ is a map
	\begin{equation}
	\rho: I_2(K_C) \ra S^2 H^0(C, K_C^{\otimes 2})
	\end{equation}
	where $I_2(K_C)$ is the kernel of the multiplication map $S^2H^0(C, K_C) \ra H^0(C, K_C^2)$.
In \cite{moonen-special} in the cyclic case and in \cite[Theorem 3.9]{fgp} for any group $G$, it is proven that if, for a general curve $C$ of our family of Galois covers,  the following condition holds
\begin{gather*}
(*) \ \ \ \ \ \dim (S^2H^0(C, K_C))^G = \dim  H^0(C, K_C^{\otimes 2})^G,
\end{gather*}
 then the family of Galois covers yields a Shimura (hence totally geodesic) subvariety of $\A_g$.  We want to show if $(*)$ does not hold, the group $G$ is abelian, and under some assumptions on the dimension of eigenspaces of the action of $G$ on $H^0(K_C)$, the family of abelian covers is not totally geodesic.
 We will do it by an explicit computation of the second fundamental form of the subvariety along some tangent directions and showing that it does not vanish. 
 
 To do this we will need to compute the first  and the second Gaussian maps of the canonical line bundle $K_C$.

 We briefly recall their definition in local coordinates. Take a local coordinate $z$ on $C$. 
 The first Gaussian (or Wahl map) is the linear map
 $$\mu_1: \wedge^2 H^0(C, K_C) \ra H^0(C, K_C^{\otimes 3}),$$
 $$\mu_1(\alpha \wedge \beta) := (f'(z) g(z) - f(z) g'(z) ) (dz)^3,$$
 where $\alpha = f(z) dz$, $\beta = g(z) dz$ are the local expressions of the holomorphic forms $\alpha$ and $\beta$. Then one immediately sees that the zero divisor of the section $\mu_1(\alpha \wedge \beta)$ is given by $2F +R$, where $F$ is the base locus of the pencil $\langle \alpha, \beta \rangle$, and $R$ is the ramification divisor of the map given by the pencil $\langle \alpha, \beta \rangle$.

Let us now recall the definition of  the second Gaussian map 
$$\mu_2: I_2(K_C) \ra H^0(C, K_C^{\otimes 4}). $$
Take a basis $\{\omega_1,..., \omega_g\}$ of $H^0(C, K_C)$ and a quadric $Q = \sum_{i,j=1}^g a_{ij} \omega_i \otimes \omega_j \in  I_2(K_C)$, where $a_{ij} = a_{ji}$, $\forall i,j$. Assume that locally we have $\omega_i = f_i(z) dz$. Since $Q \in  I_2(K_C)$, we have 
$$ \sum_{i,j=1}^g a_{ij} f_i(z) f_j(z) =0.$$ Derivating we get 
$$ \sum_{i,j=1}^g a_{ij} f'_i(z) f_j(z) =0,$$ 
hence 
$$ \sum_{i,j=1}^g a_{ij} f'_i(z) f'_j(z) +  \sum_{i,j=1}^g a_{ij} f''_i(z) f_j(z) =0 .$$ 
 In local coordinates the second Gaussian map is defined as follows: 
 $$\mu_2(Q) :=  \sum_{i,j=1}^g a_{ij} f'_i(z) f'_j(z) (dz)^4 = - \sum_{i,j=1}^g a_{ij} f''_i(z) f_j(z) (dz)^4.$$

Assume now that the Galois group is an abelian group $G \subseteq  (\Z/N\Z)^m$ of cardinality $d$, and the family of abelian covers has equations given by  \eqref{equation abelian} given by an $m\times r$ matrix $A=(r_{ij})$ whose entries are in $\Z/N\Z$ for some $N\geq 2$. For an element $n \in G$ denote by $V_n := H^0(C, K_C)_n$ and by $d_n = \dim V_n = -1+\sum_{j=1}^{r}\langle-\frac{\alpha_{j}}{N}\rangle$ (see \eqref{dn}). We have the following 
\begin{THEOR}
\label{torelli}
\begin{enumerate}
\item If there exists an element $n \in  G \subseteq (\Z/N\Z)^m$ such that $n \neq -n$, $d_n \geq 2$, $d_{-n} \geq 2$, then the subvariety of $\A_g$ induced by the family of abelian covers of genus $g \geq 4$  is not totally geodesic (hence it is not Shimura). 
\item If there exists an element $n  \in  G \subseteq (\Z/N\Z)^m$ of order 2 such that $d_n \geq 3$, then the subvariety of $\A_g$ induced by the family of abelian covers of genus $g \geq 4$ is not totally geodesic (hence it is not Shimura).	 
\end{enumerate}
	
\end{THEOR}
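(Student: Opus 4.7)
The plan is to follow the strategy used for cyclic covers in \cite{cfg, fp}: construct, at a general curve $C$ of the family, an explicit $G$-invariant quadric $Q \in I_2(K_C)^G$ and show that the second fundamental form $\rho$ of the Torelli map with respect to the Siegel metric does not vanish on $Q$ when paired with a $G$-invariant tangent direction to the family. The abelian hypothesis together with the explicit basis \eqref{forms} makes such quadrics completely explicit.

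First I would construct the quadric. The elementary observation is that if $n+n'=0$ in $G$, then $\omega_{n,\nu_1}\cdot\omega_{n',\nu_2}$, computed via \eqref{forms}, is a holomorphic $2$-form whose $w_i$-parts combine via $w_i^{N}=\prod_j(x-t_j)^{\tilde r_{ij}}$ to a rational function of $x$ alone, so this $2$-form depends on $(\nu_1,\nu_2)$ only through the sum $\nu_1+\nu_2$. In case (1), this yields the nonzero $G$-invariant element
\[
Q := \omega_{n,0}\cdot\omega_{-n,1} - \omega_{n,1}\cdot\omega_{-n,0} \ \in \ I_2(K_C),
\]
nonzero because $n\neq -n$ and $\omega_{n,0},\omega_{n,1}$ are linearly independent in $V_n$. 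In case (2), using $2n=0$, one obtains the $G$-invariant quadric
\[
Q := \omega_{n,0}\cdot\omega_{n,2} - \omega_{n,1}\cdot\omega_{n,1} \ \in \ I_2(K_C),
\]
which is nonzero thanks to $d_n\geq 3$.

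Next I would apply the formula of Colombo--Pirola--Tortora (\cite{cpt}) for $\rho$, restricted to the $G$-invariant part as in \cite[Proposition 5.4]{cfg} and \cite{fp}. The $G$-invariant tangent directions to the family at $[C]$ are Schiffer-type classes $\xi_j$ coming from motions of the branch points $t_j\in\P^1$. In this framework, $\rho(Q)(\xi_j,\xi_j)$ reduces to a sum of local residues at the points of $C$ over $t_j$, involving the local expansions of $\omega_{n,\nu}$ and $\omega_{-n,\nu'}$ (respectively $\omega_{n,\nu}$ in case (2)) dictated by the exponents $\lfloor -\overline\alpha_j/N\rfloor$ and by the leading factors $x^\nu$ in \eqref{forms}. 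The distinct $x$-shifts appearing in the two summands of $Q$ create a local structure that makes the residue computation completely explicit.

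The main obstacle is establishing that $\rho(Q)(\xi_j,\xi_j)\neq 0$ at some branch point $t_j$ from this local residue computation. This is essentially a direct generalisation of the cyclic case in \cite[Proposition 5.4]{cfg} and \cite{fp} to the multi-index setting of abelian covers; the nonvanishing reduces to the non-identical vanishing of an explicit polynomial on the configuration space $Y_r$ of branch points, so it holds at a general point of the family. The hypothesis $g\geq 4$ ensures the Torelli map is an orbifold immersion at the general $[C]$, and the resulting nontrivial second fundamental form contradicts total geodesicity of the family's image in $\A_g$, completing the proof.
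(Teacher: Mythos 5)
Your quadrics are exactly the ones used in the paper, and the overall framework (evaluate the second fundamental form $\rho$ on a $G$-invariant quadric against a $G$-invariant tangent direction and reduce to a Gaussian map computation) is the right one. There are, however, two genuine gaps.

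First, the choice of tangent vector. You propose to evaluate $\rho(Q)$ on Schiffer-type classes attached to the branch points $t_j$, i.e.\ on classes supported on the fibres $\pi^{-1}(t_j)$, which consist of ramification points of $\pi$. This is precisely where the computation degenerates. Writing $\omega_2=x\omega_1$, $\omega_4=x\omega_3$, one has $Q(p,q)=(x(q)-x(p))\bigl(\omega_1(p)\omega_3(q)-\omega_1(q)\omega_3(p)\bigr)$, so $Q$ vanishes on every pair of points lying in a common fibre of $\pi$; and the diagonal contribution is governed by the second Gaussian map, which the explicit local computation shows to be $\mu_2(Q)=x'(z)dz\cdot\mu_1(\omega_1\wedge\omega_3)$ in case (1) (and $-(x'(z))^2f_1(z)^2(dz)^4$ in case (2)). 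It therefore contains the factor $dx$ and vanishes at every ramification point of $\pi$, i.e.\ at every point over a branch point. So the naive residue computation you describe returns $0$ and proves nothing; to salvage it you would have to analyse the degenerate local structure of the deformation class at a ramification point, which you have not done. The paper avoids this entirely by taking $v=\sum_{i=1}^d\xi_{p_i}$ over a \emph{general, unramified} fibre $\pi^{-1}(t)$: this vector is $G$-invariant, hence tangent to the family; the off-diagonal terms vanish simply because $x(p_i)=x(p_j)=t$; and the diagonal term equals $-2\pi\i\, d\,\mu_2(Q)(p_1)$, which is nonzero for generic $t$ because $\mu_2(Q)$ vanishes only on the base locus and ramification of the pencil $\langle\omega_1,\omega_3\rangle$ and on the ramification of $\pi$.

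Second, you assume the Torelli map is an orbifold immersion at a general member, but that fails when the whole family is contained in the hyperelliptic locus, which does occur for abelian covers of $\P^1$. The paper treats this case separately, using the second fundamental form of the restriction of the Torelli map to ${\mathsf{HE}}_g$ (which is an orbifold immersion) together with the identity $\rho_{HE}(Q)(v\odot w)=\rho(Q)(v\odot w)$ for $\tau$-invariant classes, and, when the hyperelliptic involution $\tau$ does not lie in $G$, enlarging the group to $\langle G,\tau\rangle$, which is still abelian since $\tau$ is central in $\Aut(C)$. Without this your argument does not cover all families in the statement.
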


\begin{proof}

Assume first that the generic curve $C$ of the family of abelian covers is not hyperelliptic. 
In case $(1)$, by assumption, and by \eqref{forms} there exists subspaces $\langle \omega_1, \omega_2 = x \omega_1 \rangle \subseteq V_n$, $\langle \omega_3, \omega_4 = x \omega_3 \rangle \subseteq V_{-n}$. Then the quadric $$Q:= \omega_1 \odot \omega_4  -\omega_2 \odot \omega_3$$ clearly lies in $(I_2(K_C))^G$. Consider the $d:1$ cover $ \pi: C \ra C/G \cong \P^1$ and take a generic fibre $\pi^{-1}(t) = \{p_1,...,p_d\}$ such that $p_i \neq p_j$, $\forall i\neq j$ and such that $\forall i$, $p_i$ does not belong to the base locus of the pencil given by $\langle \omega_1, \omega_3\rangle$, nor to the ramification locus of the map given by the pencil $\langle \omega_1, \omega_3\rangle$. 

Consider the vector $v = \sum_{i=1}^d \xi_{p_i} \in H^1(T_C)^G$, where $\xi_{p_i}$ denotes the Schiffer variation at $p_i$ (for a definition of it see e.g. \cite[section 2.2]{cfg}) . Then using \cite[Theorem 3.1]{cpt}, or \cite[Theorem 2.2]{cfg} and the fact that $\rho$ is $G$-equivariant,  we have 
\begin{equation}
\rho(Q)(v \odot v) = \sum_{i \neq j} \rho(Q)(\xi_{p_i}\odot \xi_{p_j}) + \sum_{i=1}^d  \rho(Q)(\xi_{p_i}\odot \xi_{p_i}) = -4 \pi \i \sum_{i \neq j} Q(p_i, p_j) \cdot \eta_{p_i}(p_j) + d  \rho(Q)  (\xi_{p_1}\odot \xi_{p_1}).
\end{equation}
For $i \neq j$, we have $Q(p_i,p_j) =0$, since $x(p_i) = x(p_j) =t$, while $\rho(Q)  (\xi_{p_1}\odot \xi_{p_1}) = -2 \pi \i \cdot \mu_2(Q)(p_1)$, where $\mu_2: I_2(K_C) \ra H^0(C, K_C^{\otimes 4}) $ is the second Gaussian map of the canonical line bundle $K_C$. 

So we have 
\begin{equation}
\rho(Q)(v \odot v) = -2d \pi i \mu_2(Q)(p_1).
\end{equation}
Assume that in a local coordinate  around $p_1$, $\omega_i = f_i(z) dz$, with $f_2(z) = x(z) f_1(z)$, $f_4(z) = x(z) f_3(z)$.  Then we have

\begin{align*}
\mu_2(Q) =& (f'_1(z) \cdot f'_4(z) - f'_2(z) \cdot f'_3(z))(dz)^4 = \\
=&\big{(}f'_1\cdot (x'(z)f_3(z) + x(z)f'_3(z)) - (x'(z)f_1(z) + x(z) f'_1(z)) \cdot  f'_3(z)\big{)}(dz)^4 = \\   =&x'(z) \cdot \big{(}f'_1(z) f_3(z) -f_1(z) f'_3(z) \big{)}(dz)^4 = \\= &x'(z)dz \cdot \mu_1(\omega_1 \wedge \omega_3),
\end{align*}

where $\mu_1: \wedge^2H^0(C, K_C) \ra H^0(C, K_C^{\otimes 3}) $ denotes the first Gaussian map of the canonical bundle $K_C$. 
So $\mu_2(Q)$ vanishes exactly in the base locus of the linear system $\langle \omega_1, \omega_3\rangle$, in the ramification points of the map given by $\langle \omega_1, \omega_3\rangle$ and in the ramification points of the cover $\pi: C \ra \P^1$. Hence by our choice of $t$, we get $\mu_2(Q)(p_1) \neq 0$, so $\rho(Q)(v \odot v) \neq 0$, so the variety given by the family of abelian covers is not totally geodesic.

In fact, call $X$ the subvariety of $\ag$ given by the family of abelian covers. Then 
the second fundamental form of $X$ in $\ag$ is a map $$S^2 T_X \ra N_{X/\ag},$$
where $T_X$ is the tangent bundle of $X$ and $ N_{X/\ag}$ is the normal bundle of $X$ in $\ag$. Its dual is a map $$\rho_X: N^*_{X/\ag} \ra S^2 T^*_X.$$ Clearly the conormal bundle $N^*_{{{\M_g}/\ag}|X}$ of $\M_g$ in $\ag$ restricted to $X$  is contained in the conormal bundle $N^*_{X/\ag}$ of $X$ in $\ag$. So, at a point $[C] \in X$, we have 
$$I_2(K_C) = N^*_{{{\M_g}/\ag}, [C]} \subset N^*_{{X/\ag}, [C]}.$$
Since $v \in H^1(C, T_C)^G = T_{X, [C]}$  is  tangent to  $X$, we have 
$$\rho_X(Q)(v \odot v) = \rho(Q)(v \odot v) \neq 0.$$ 
So we have shown that the dual $\rho_X$ of the second fundamental form of the subvariety $X$ is not identically zero, hence $X$ is not totally geodesic. 

In case (2) by assumption, there exists a  subspace $\langle \omega_1, \omega_2 = x \omega_1, \omega_3 = x^2 \omega_1 \rangle \subseteq V_n$, with $n = -n$. Thus we can take the quadric $$Q:= \omega_1 \odot \omega_3 - \omega_2 \odot \omega_2 \in I_2(K_C)^G.$$ For a general fibre $\pi^{-1}(t) =\{p_1,...,p_d\}$, take $v = \sum_{i=1}^d \xi_{p_i} \in H^1(T_C)^G$ as above,  then  
\begin{equation}
\rho(Q)(v \odot v)  = -4 \pi \i \sum_{i \neq j} Q(p_i, p_j) \cdot \eta_{p_i}(p_j) + d \rho(Q)  (\xi_{p_1}\odot \xi_{p_1}) = -2d \pi i \mu_2(Q)(p_1).
\end{equation}
If we write in local coordinates as above $\omega_i= f_i(z) dz$, $i=1,2,3$, we have $f_2(z) = x(z) f_1(z)$, $f_3(z) = (x(z))^2 f_1(z)$. So we compute 
\begin{align*} 
\mu_2(Q) =& f'_1(z) \cdot \big{(}2 x(z) x'(z) f_1(z) + (x(z))^2f'_1(z)\big{)}\\
 - &\big{(}x'(z) f_1(z) + x(z) f'_1(z) \big{)}\cdot \big{(}x'(z) f_1(z) + x(z) f_1(z)\big{)}\\
=& - (x'(z))^2( f_1(z))^2.
\end{align*} 

Therefore if we take $t$ generic  as above, we have $\mu_2(Q)(p_1) \neq 0$, hence $\rho(Q)(v \odot v) \neq 0$.

It remains to consider the case in which the family of abelian covers is contained in the hyperelliptic locus. The Torelli map restricted to the hyperelliptic locus ${\mathsf {HE}}_g$, $j_{h}: {\mathsf {HE}}_g \rightarrow {\mathsf A}_g$,  is an orbifold immersion (\cite{oort-st}) and in \cite[Prop. 5.1]{cf-trans}, \cite[Section 6]{fp} the second fundamental form of the restriction of the Torelli map to the hyperelliptic locus has been studied.

We have the following tangent bundle exact sequence

\begin{gather}
\label{tangent}
  \xymatrix{& & 0 \ar[d] & & &\\
    &0\ar[r] &  T_{{\mathsf {HE}}_g}\ar[d]\ar[r] &  {T_{{\mathsf A}_g}}_{|{\mathsf{HE}_g}}\ar[d]^=\ar[r] & N_{{\mathsf {HE}}_g/{\mathsf A}_g} \ar[r]\ar[d] & 0\\
    &  &{T_{{\mathsf M}_g}}_{|{\mathsf HE}_g} \ar[r]&  {T_{{\mathsf A}_g}}_{|{\mathsf{HE}_g}}\ar[r]& {N_{{\mathsf {M}}_g/{\mathsf A}_g}}_{| {{\mathsf {HE}}_g}}
     \ar[r] &0\\
    & & & &  &}
\end{gather}

Denote by 
\begin{equation}
\rho_{HE}: N^*_{{\mathsf {HE}}_g|{\mathsf A}_g} \rightarrow S^2 T^*_{{\mathsf {HE}}_g}
\end{equation}
the dual of the second fundamental form of $j_h$. 

At a point $[C] \in {\mathsf {HE}}_g$, the dual of \eqref{tangent} is

\begin{equation}
\label{cotangent}
  \xymatrix{
        &0 \ar[r] & I_2(K_{C}) \ar[d]^=\ar[r]& S^2 H^0(K_{C})\ar[d]^=\ar[r]^m& H^0(K_{C}^{\otimes 2})\ar[d] &\\
    &0\ar[r] & I_2(K_{C})\ar[r] &  S^2 H^0(K_{C})\ar[r]^m &  H^0(K_{C}^{\otimes 2})^+\ar[r] & 0}
\end{equation}

where $H^0(C, K_C^{\otimes 2})^+$ denotes the subspace of  $H^0(C, K_C^{\otimes 2})$ of the elements which are invariant under the hyperelliptic involution $\tau$ and $I_2(K_C)$ can be identified with the set of quadrics containing the rational normal curve.  

Denote by $H^1(T_C)^+$ the subspace of $H^1(T_C)$ of the elements which are invariant under the hyperelliptic involution $\tau$, that is the tangent space of the hyperelliptic locus at the point $[C]$. We have $\forall Q \in I_2(K_C)$, $\forall v,w \in H^1(T_C)^+$, 
$$\rho_{HE}(Q) (v \odot w) = \rho(Q)(v \odot w)$$ (see \cite[Prop. 5.1]{ cf-trans}).  

So, if the the hyperelliptic involution is contained in the Galois group $G$ of the cover, the tangent vector $v = \sum_{i=1}^d \xi_{p_i}$ is $G$-invariant, hence $\tau$-invariant, i.e. $v \in H^1(T_C)^+$. Therefore if we take a quadric $Q$ as above (both in case (1), and (2)), we have: 

$$
\rho_{HE}(Q)(v \odot v) = \rho(Q)(v \odot v) = \sum_{i \neq j} \rho(Q)(\xi_{p_i}\odot \xi_{p_j}) + \sum_{i=1}^d  \rho(Q)(\xi_{p_i}\odot \xi_{p_i}) = $$
$$=-4 \pi \i \sum_{i \neq j} Q(p_i, p_j) \cdot \eta_{p_i}(p_j) + d  \rho(Q)  (\xi_{p_1}\odot \xi_{p_1}) = -2 \pi \i d \mu_2(Q)(p_1) \neq 0
$$
for a generic choice of the point $t \in \P^1$. 

If the hyperelliptic involution $\tau$ is not contained in $G$, we can consider the subgroup $\tG$ of $Aut(C)$ generated by $G$ and $\tau$. Since $\tau$ is central in $Aut(C)$, the group $\tG$ is abelian and, substituting $G$ with $\tG$ we can assume that the hyperelliptic involution is contained in the Galois group of the cover. By this we mean that we take the quadrics $Q$ as above, we consider the map $h: C \ra C/\tG \cong \P^1$, we take a generic point $t \in \P^1$ and the vector $v = \sum_{i=1}^{2d} \xi_{q_i} \in H^1(T_C)^{\tG} \subset H^1(TC)^+$, where $h^{-1}(t) = \{q_1,..., q_{2d}\}$. Then we have
$$
\rho_{HE}(Q)(v \odot v) = \rho(Q)(v \odot v) = \sum_{i \neq j} \rho(Q)(\xi_{q_i}\odot \xi_{q_j}) + \sum_{i=1}^d  \rho(Q)(\xi_{q_i}\odot \xi_{q_i}) = $$
$$=-4 \pi \i \sum_{i \neq j} Q(q_i, q_j) \cdot \eta_{q_i}(q_j) + d  \rho(Q)  (\xi_{q_1}\odot \xi_{q_1}) = -2 \pi \i d \mu_2(Q)(q_1) \neq 0,
$$
for $t$ general. 
This concludes the proof. \end{proof}

\begin{COR}
\label{Z2}
Under the above assumptions, if $G = (\Z/2\Z)^m$ ($g \geq 4$) and the family is totally geodesic, then 
\begin{enumerate}
\item $r \leq 6m$, and $g \leq 1 + 2^{m-1}(3m-2),$  
\item $m \leq 6$, hence $r \leq 36$, $g \leq 513$.  

\end{enumerate}
\end{COR}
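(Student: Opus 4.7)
The plan is to translate the totally geodesic hypothesis into a combinatorial condition on the monodromy matrix $A$, and then extract the numerical bounds by elementary counting together with one coding-theoretic refinement.

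First, since every nonzero element of $G=(\Z/2\Z)^m$ has order $2$ (and automatically satisfies $n=-n$), Theorem \ref{torelli} forces $d_n\le 2$ for every $n\in G\setminus\{0\}$ under the totally geodesic hypothesis. Specialising formula \eqref{dn} to $N=2$ gives $d_n=-1+\tfrac12 k_n$, where $k_n:=\#\{j : n\cdot l_j\equiv 1\pmod{2}\}$ and $l_j$ denotes the $j$-th column of $A$. So the hypothesis becomes the clean combinatorial constraint $k_n\le 6$ for every nonzero $n\in(\Z/2\Z)^m$. Applied to the standard basis elements $n=e_i$, this gives $\sum_{i=1}^m k_{e_i}=\sum_{j=1}^r |l_j|\ge r$ (since every column of $A$ is nonzero), hence $r\le 6m$; combined with the Riemann--Hurwitz reduction $g=1+2^{m-2}(r-4)$ (using $\gcd(2,\tilde r_{1j},\ldots,\tilde r_{mj})=1$ for every $j$), this yields item~(1).

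For item~(2), I would first note that a double-count of pairs $(n,j)$ with $n\cdot l_j\equiv 1\pmod{2}$ gives $\sum_{n\ne 0}k_n=r\cdot 2^{m-1}$, so the bound $k_n\le 6$ also forces $r\le 12-12/2^m$, i.e.\ $r\le 11$ for $m\ge 2$. Since the columns of $A$ sum to zero, each row of $A$ has even Hamming weight, and consequently every nonzero linear combination of rows has even weight; in particular $k_n\in\{2,4,6\}$ for every nonzero $n$, and the $m$ rows of $A$ are linearly independent inside the dimension-$(r-1)$ even-weight subcode of $(\Z/2\Z)^r$, forcing $r\ge m+1$ and so $m\le 10$.

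To sharpen $m\le 10$ to $m\le 6$ is the main obstacle. My plan is to analyse the binary linear code $C\subset(\Z/2\Z)^r$ of dimension $m$ spanned by the rows of $A$: by the above its weight enumerator is supported in $\{0,2,4,6\}$, and all $r$ coordinates are active. Applying the MacWilliams identity to the weight enumerator of $C$ and requiring each coefficient of the resulting dual weight enumerator to be a non-negative integer imposes moment-type constraints on the weight distribution of $C$; a finite case-check over the admissible pairs $(r,m)$ with $m\in\{7,8,9,10\}$ and $m+1\le r\le 11$ should exclude each of them, leaving $m\le 6$. Substituting $m=6$ into the bounds of item~(1) then yields $r\le 6m\le 36$ and $g\le 1+2^{m-1}(3m-2)\le 513$.
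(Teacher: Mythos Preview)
Your argument for item~(1) is correct and coincides with the paper's proof.

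For item~(2) your plan diverges from the paper and is left incomplete: you reduce to a proposed MacWilliams case-check over $m\in\{7,\dots,10\}$, $m+1\le r\le 11$, but never carry it out, so as written there is a genuine gap. The paper bypasses all of this with a one-line linear-algebra observation that you overlooked. Since the columns of $A$ generate $(\Z/2\Z)^m$, some $m$ of them form a basis; after composing with an automorphism of $(\Z/2\Z)^m$ one may assume the standard basis vectors $e_1,\dots,e_m$ occur among the columns. Now take $n=(1,\dots,1)$: since $n\cdot e_i=1$ for every $i$, the vector $nA$ has at least $m$ nonzero entries, i.e.\ $k_n\ge m$, and your own constraint $k_n\le 6$ gives $m\le 6$ immediately. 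In your coding-theoretic framework this is simply the statement that the maximum Hamming weight in an $m$-dimensional binary linear code is always at least $m$ (pick $m$ columns of a generator matrix forming a basis, and the unique functional equal to $1$ on all of them gives the desired codeword). This elementary fact replaces your entire MacWilliams analysis.

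Incidentally, your double-count yielding $r\le 11$ is correct and strictly sharper than the paper's $r\le 36$; it is just not needed for the corollary as stated.
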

\begin{proof}
 By Theorem \ref{torelli} (2),  if the family is totally geodesic,  for every $n \in G$, with $n \neq 0$, we must have $d_n \leq 2$. 
Consider the i-th row $(r_{i1},...,r_{i r})$ of the matrix $A$ that gives the monodromy of the cover $\pi: C \ra C/G \cong \P^1$.  Denote by  $\tilde{r}_{ij}$ the lift of $r_{ij}$ in $\Z \cap [0,N)$. Set $e_i = (0,...,0,1,0,...,0)^t \in G$ and denote by $\beta_i$ the number of nonzero entries in the row $(r_{i1},...,r_{i r}) = e_i \cdot A$.  Then 
$$2 \geq d_{e_i} = -1 + \sum_{j=1}^r\big{ \langle} \frac{-\tilde{r}_{ij}}{2} \big{\rangle} =  -1 +\frac{\beta_i}{2} ,$$
hence $\beta_i \leq 6$, $\forall i =1,...,m$. So if $r >6m$ there must be a column of $A$ which is zero, a contradiction. Recall that the columns of $A$ give the monodromy of the cover, hence they have order 2.  So $r \leq 6m$ and by the Riemann Hurwitz formula we have: 
$$2g-2 = -2 \cdot 2^m + r\cdot \frac{2^m}{2}, $$
so $$g = 1 + 2^{m-1}(\frac{r}{2} -2) \leq 1 + 2^{m-1}(3m-2).$$

It remains to show that $m \leq 6$. Since $G=(\Z/2\Z)^m$ is generated by the columns of the matrix $A$, the set of the columns of $A$ contains a basis of the vector space $ (\Z/2\Z)^m$. Hence, composing with an automorphism of $(\Z/2\Z)^m$, we can assume that the canonical basis $e_1,...,e_m$ is a subset of the set of the columns of $A$. So, consider the element $n:= (1,...,1)^t$, seen as a character $\chi: G \ra \{ \pm1\} \subset \C^{*}$.  Then $(1,...,1) A = (\alpha_1,...,\alpha_r) $ has at least $m$ non zero entries, corresponding to the columns given by $e_1,...,e_m$. So one immediately computes $d_n \geq -1 + \frac{m}{2}$, and by Theorem \ref{torelli} (2), we must have $m \leq 6$. Thus $r \leq 6m \leq 36$ and $g \leq 1 + 2^{m-1}(3m-2) \leq 513$. 

\end{proof}

\begin{COR}
\label{general}
Assume $G \subseteq (\Z/N\Z)^m$, $N \geq 3$, set $d:= \#G$, $g \geq 4$. Assume that we have a family of $G$-covers of $\P^1$ yielding a totally geodesic subvariety of $\A_g$.  Then $r \leq 2Nm$ and $ g \leq 1 + d(m(N-1)-1).$
 \end{COR}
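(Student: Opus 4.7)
The plan is to follow the structure of Corollary \ref{Z2}: apply Theorem \ref{torelli} to carefully chosen characters of $G$ to control the sparsity of the rows of the defining matrix $A$, then combine the resulting bound on $r$ with Riemann--Hurwitz to bound $g$.

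First, for each $i \in \{1,\dots,m\}$ I will consider the coordinate character $\chi_i : G \to \C^*$, $\chi_i(a) = e^{2\pi \i a_i/N}$, and let $n_i \in G$ be the element corresponding to $\chi_i$ under the isomorphism $G \simeq G^*$. Writing $\beta_i$ for the number of entries of the $i$-th row of $A$ that are nonzero modulo $N$, formula \eqref{dn} applied to $\chi_i$ (using $\chi_i(l_j) = e^{2\pi\i r_{ij}/N}$) yields
$$ d_{n_i} = -1 + \sum_{j=1}^{r} \left\langle -\tfrac{\tilde r_{ij}}{N}\right\rangle \geq -1 + \frac{\beta_i}{N}, $$
since each nonzero fractional part is at least $1/N$; the same inequality holds for $d_{-n_i}$, because the zero pattern of $-n_i\cdot A$ coincides with that of $n_i\cdot A$.

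Now I split into cases. If $n_i \neq -n_i$, Theorem \ref{torelli}(1) forces $\min(d_{n_i}, d_{-n_i}) \leq 1$, so $\beta_i \leq 2N$. If $n_i \neq 0$ is an involution, then $\chi_i^2 = 1$ forces every $r_{ij}$ to lie in $\{0, N/2\}$, so each nonzero fractional part contributes exactly $1/2$, and $d_{n_i} = -1 + \beta_i/2$; Theorem \ref{torelli}(2) then yields $\beta_i \leq 6 \leq 2N$ (using $N\geq 3$, hence $N\geq 4$ in this subcase). If $n_i = 0$, the $i$-th row of $A$ is identically zero and $\beta_i = 0$. In all cases $\beta_i \leq 2N$. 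Because every column of $A$ is nonzero in $(\Z/N\Z)^m$, each column has at least one nonzero entry, so
$$ r \leq \sum_{i=1}^{m} \beta_i \leq 2Nm. $$

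Finally, Riemann--Hurwitz for the cover $C \to \P^1$ gives
$$ 2g-2 = -2d + \sum_{j=1}^{r} d\left(1 - \tfrac{1}{m_j}\right) \leq -2d + dr\cdot\tfrac{N-1}{N}, $$
where $m_j \mid N$ is the order of the column $l_j$; combining this with $r \leq 2Nm$ yields the bound $g \leq 1 + d(m(N-1) - 1)$. The only delicate point is the involution case, where a direct application of Theorem \ref{torelli}(2) would give only $\beta_i \leq 3N$; the saving observation is that when $\chi_i$ has order two the entries of the $i$-th row are constrained to $\{0, N/2\}$, so each nonzero fractional part is $1/2$ rather than just $1/N$, sharpening the bound to $\beta_i \leq 6$ and making the uniform estimate $\beta_i \leq 2N$ genuine.
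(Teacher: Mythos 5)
Your proof is correct and follows essentially the same route as the paper's: apply Theorem \ref{torelli} to the coordinate characters to bound the number $\beta_i$ of nonzero entries in each row of $A$ by $2N$ (treating the order-two rows separately via the observation that their entries lie in $\{0,N/2\}$, which gives $\beta_i\le 6\le 2N$), sum over rows to get $r\le 2Nm$, and conclude with Riemann--Hurwitz. The only cosmetic difference is that you organize the cases by the order of $n_i$ ($n_i\ne -n_i$, involution, trivial) while the paper organizes by the order of the row, but the estimates are identical.
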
	

\begin{proof} 
Consider as above the element $e_i=(0,...,0, 1,0,...,0)^t$, then $e_i^t \cdot A = (r_{i1},...,r_{ir})$ is the  $i^{th}$-row of $A$. By Theorem \ref{torelli}, for each row $e_i^t \cdot A$ of the matrix $A$ we must have $d_{e_i} \leq 2$, if $e_i^t \cdot A$ has order 2, otherwise either $d_{e_i} \leq 1$, or $d_{-e_i} \leq 1$. 

We claim that in the first case we obtain $\beta_i \leq 6$, while in the second case $\beta_i \leq 2N$. 

In fact if $e_i^t \cdot A$ has order 2, then all its nonzero entries are equal to $N/2$, hence 
$2 \geq d_{e_i} = -1 + \sum_{j=1}^r\big{ \langle} \frac{-\tilde{r}_{ij}}{N} \big{\rangle} =  -1 +\frac{\beta_i}{2} ,$ so $\beta_i \leq 6$. 

If $e_i^t \cdot A$ has order bigger than 2, we have either 
$$1 \geq  d_{e_i} = -1 + \sum_{j=1}^r \big{\langle} \frac{-\tilde{r}_{ij}}{N} \big{\rangle} = -1 + \sum_{j |  \tilde{r}_{ij} \neq 0}(1 -  \frac{\tilde{r}_{ij}}{N} ) \geq  -1+\beta_i- \beta_i \frac{N-1}{N},  $$
(since $\tilde{r}_{ij} \leq N-1$), $\forall i,j$, thus $\beta_i\leq 2N, $

or 
$$1 \geq  d_{-e_i} = -1 + \sum_{j=1}^r \big{\langle} \frac{\tilde{r}_{ij}}{N} \big{\rangle} = -1 + \sum_{j |  \tilde{r}_{ij} \neq 0}\frac{\tilde{r}_{ij}}{N}  \geq  -1+\frac{\beta_i}{N}, $$
thus again we have $\beta_i \leq 2N. $

Denote by $p$ the number of rows of $A$ of order 2, and by $q$ the number of rows of $A$ of order greater than 2.  Then we must have $r \leq \sum_{i=1}^m \beta_i \leq 6p+2Nq \leq 2N(p+q) = 2Nm$.

By the Riemann Hurwitz formula we have: 
$$2g-2 = d\big{(}-2 + \sum_{i=1}^r (1 - \frac{1}{m_i})\big{)} \leq -2d + dr(1 - \frac{1}{N}) \leq -2d +2dNm(1 - \frac{1}{N}) = 2d(m(N-1)-1),$$
since $m_i \leq N$, $\forall i$ and $r \leq 2Nm$. Hence $g \leq 1 + d(m(N-1)-1).$

\end{proof}

\begin{COR}
\label{Zp}
Assume $G = (\Z/p\Z)^m$, with $p$ a prime number, $p \geq 3$, $g \geq 4$. Assume that we have a family of $G$-covers of $\P^1$ yielding a totally geodesic subvariety of $\A_g$.  Then $m \leq 2p$, $r \leq 4 p^2$ and $ g \leq 1 + p^{2p}(2p(p-1) -1).$
\end{COR}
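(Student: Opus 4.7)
The plan is to mirror the proof of Corollary~\ref{Z2}, replacing the $N=2$ input to Corollary~\ref{general} by $N=p$. Applying Corollary~\ref{general} directly gives the bound $r \leq 2pm$ and the genus estimate $g \leq 1 + p^m(m(p-1)-1)$, since $\#G = p^m$. So the entire content of the corollary reduces to proving $m \leq 2p$: once this is in hand, $r \leq 2pm \leq 4p^2$, and substituting $m \leq 2p$ and $p^m \leq p^{2p}$ into the general genus bound produces $g \leq 1 + p^{2p}(2p(p-1)-1)$.

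To bound $m$, I would imitate the second part of the proof of Corollary~\ref{Z2}. Since $G = (\Z/p\Z)^m$ is spanned (as an $\F_p$-vector space) by the columns of $A$, some $m$ of these columns form a basis. Composing with an automorphism of $G$ (equivalently, left-multiplying $A$ by an element of $GL_m(\F_p)$, which does not affect the isomorphism class of the family), I may assume that the first $m$ columns of $A$ are the standard basis vectors $e_1, \ldots, e_m$.

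Now apply Theorem~\ref{torelli}(1) to the element $n = (1,1,\dots,1) \in G$. Since $p \geq 3$, we have $n \neq -n$, so the assumption of total geodesicity forces $\min(d_n, d_{-n}) \leq 1$. Setting $(\alpha_1, \ldots, \alpha_r) = n \cdot A$, the first $m$ entries are $\alpha_j = n \cdot e_j = 1$, contributing $\langle -1/p\rangle = (p-1)/p$ each to $d_n$ and $\langle 1/p\rangle = 1/p$ each to $d_{-n}$ through formula~\eqref{dn}. Dropping the non-negative contributions from the remaining columns yields
\begin{equation*}
d_n \geq -1 + m\cdot\tfrac{p-1}{p}, \qquad d_{-n} \geq -1 + m\cdot\tfrac{1}{p}.
\end{equation*}
The constraint $d_n \leq 1$ gives $m \leq 2p/(p-1) \leq 2p$, while $d_{-n} \leq 1$ gives $m \leq 2p$, so in either case $m \leq 2p$, as required.

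The only potentially subtle point is the normalization step that places $e_1, \ldots, e_m$ among the columns of $A$: one must check that this cosmetic change of coordinates on the Galois group does not alter the hypothesis of total geodesicity, which is immediate since it corresponds to an automorphism of $G$. Beyond that, the argument is a direct transcription of the $(\Z/2\Z)^m$ computation, with the cleaner dichotomy afforded by Theorem~\ref{torelli}(1) replacing the order-$2$ case of Theorem~\ref{torelli}(2) that was forced on us when $p=2$.
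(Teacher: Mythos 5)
Your proposal is correct and follows essentially the same route as the paper: reduce to showing $m \leq 2p$ via Corollary~\ref{general}, normalize $A$ so that $e_1,\dots,e_m$ appear among its columns, and apply Theorem~\ref{torelli}(1) to $n=(1,\dots,1)$ to get $d_n \geq -1+m(p-1)/p$ and $d_{-n} \geq -1+m/p$, either of which forces $m \leq 2p$. The details, including the dichotomy $d_n \leq 1$ or $d_{-n}\leq 1$ and the final substitutions, match the paper's argument.
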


\begin{proof}
By Corollary \ref{general} we know that $r \leq 2pm$ and $ g \leq 1 + p^m(m(p-1)-1).$ Then it suffices to show that $m \leq 2p$. 

Since the columns of $A$ generate the vector space $G = (\Z/p\Z)^m$, a subset of the set of the columns of $A$ gives a basis of $G$. Hence, applying an automorphism of $G$ we can assume that the canonical basis $e_1,...,e_m$ is a subset of the set of the columns of $A$. 
Consider the element $n = (1,...,1)^t \in G$, set $ n^t A = (\alpha_1,..., \alpha_r)$. Then, by Theorem \ref{torelli} (1), either 
$$1 \geq d_n = -1 + \sum_{j=1}^r \big{\langle} \frac{-\alpha_{j}}{p} \big{\rangle} \geq -1 + m  (1 - \frac{1}{p}),$$
or $$1 \geq d_{-n} = -1 + \sum_{j=1}^r \big{\langle} \frac{\alpha_{j}}{p} \big{\rangle} \geq -1 +  \frac{m}{p}.$$

Hence $m \leq 2p$. 
\end{proof}

So this concludes the proof of Theorem \ref{A}.

Clearly the above estimates are not sharp, as one can see by the following example.\\

{\bf Example}. $r=8$, $g=13$. 	
	$G = \Z/ 2 \Z \times \Z/4\Z = \langle g_1 \rangle \times \langle g_2 \rangle \subseteq  \Z/ 4 \Z \times \Z/4\Z $, $g_1 \mapsto (2,0)^t$, $g_2 \mapsto (0,1)^t$. 
	The family of covers is given by the matrix 
	
	$$A=  {\small \left( \begin{array}{cccccccccc}
			2&2&2&2&0&0&0&0\\
			0&0&0&0&1&1&1&1\\
			\end{array} \right)},$$
			so the equations can be written as:

	\begin{gather*}
	w_1^2 = \prod_{i=1}^4(x -t_i)\\
	w_2^4 = \prod_{i=5}^{8}(x-t_i).
	\end{gather*} 
	By the  Hurwitz one immediately computes $g =13$.
The non zero elements of $G$, with the identification of $G$ with its group of characters explained in section 2  are: 
$$g_1 = (1 \ 0) A, \ g_2 = (0 \ 1) A, \ g_1+g_2=(1 \ 1) A, \ g_1+2g_2= (1 \ 2)A, \  g_1+3g_2=(1 \ 3)A,  \ 2g_2=(0 \ 2)A, \ 3g_2=(0 \ 3) A. $$	

So we have: 
$$d_{g_1} = 1, \ d_{g_2} = 2, \ d_{g_1+g_2 } = 4, \ d_{g_1+2g_2} = 3, \ d_{g_1+3g_2 } = 2, \ d_{2g_2} = 1, \ d_{3g_2} =0.$$
Since  $d_{g_1+g_2}=4$, $-(g_1+g_2) = g_1 +3g_2$ and $d_{g_1 +3g_2} = 2$, we can apply Theorem \ref{torelli}(1) and conclude that this family gives a subvariety of $\A_{13}$ contained in the Torelli locus which is not totally geodesic. We could also apply Theorem \ref{torelli}(2) to conclude, since  $d_{g_1+2g_2} =3$ and $d_{g_1+2g_2}$ has order 2.

	\section{Totally geodesic subvarieties in the Prym loci}
	Denote by $ { \rgb}$ the moduli space of isomorphism classes of triples $[(C, \alpha, B)]$ where $C$ is a smooth complex projective curve of genus $g$, $B$ is a reduced effective divisor of degree $b$ on $C$ and $\alpha \in Pic(C)$ is such that $\alpha^{\otimes 2}={\mathcal O}_{C}(B)$. A point  $[(C, \alpha, B)] \in {\rgb}$ determines a double cover of $C$, $f:\tC\to C$ branched on $B$, with $\tC=\Spec({\mathcal O}_{C}\oplus \alpha^{-1})$.
	
	The Prym variety $P(C, \alpha, B)$ (also denoted by $P(\tC,C)$) associated to $[(C,\alpha, B)]$ is the connected component containing the origin of the kernel of the norm map $\Nm_{f}: J\tC \to JC$.
	When $b>0$, $\ker \Nm_{f}$ is connected. The variety  $P(C, \alpha, B)$ is a  polarised abelian variety of dimension $g-1+\frac{b}{2}$. In fact, if we denote by $\Xi$ the restriction to $P(\tC,C)$ of the principal polarisation on $J\tC$,  if $b=0$, $\Xi$ is twice a principal polarisation, hence we consider on $P(\tC, C)$ this principal polarisation. If $b>0$, we endow  $P(\tC, C)$ with the polarisation  $\Xi$ which  is of type $\delta=(1,\dots, 1, \underbrace{2,\dots,2}_{g\text{ times}})$.

If we denote by $\A^\delta_{g-1+\frac{b}{2}}$  the moduli space of abelian varieties of dimension $g-1+\frac{b}{2} $ with a polarization of type $\delta$, then we consider the Prym map 
	\[ \pgb: \rgb \lra \A^\delta_{g-1+\frac{b}{2}}, \quad [(C,\alpha,B)] \longmapsto  [(P(C,\alpha,B), \Xi)].\]

	The dual of the differential of the Prym map $\pgb$ at a generic point $[(C, \alpha, B)]$  is given by the multiplication map
	\begin{equation}
	\label{dp}
	(d\pgb)^* : S^2H^0(C, {\omega}_C \otimes \alpha) \to H^0(C, \omega^2_C\otimes\mathcal O_C(B))
	\end{equation}
	
	The multiplication map is surjective if  $\dim \rgb \leq \dim {\A}^{\delta}_{g-1+\frac{b}{2}},$ (\cite{lange-ortega}).  	Assume we have a family of Galois covers $\psi_t: \tC_t \ra \P^1 = \tC_t/\tG$, where $\tG$ is a finite group containing a central  involution $\sigma$. Then we have an exact sequence
	$ 0 \rightarrow  \langle \sigma \rangle \ra \tG \ra G \ra 0,$ and a commutative diagram  
	
	\begin{equation}
	\label{tc-c}
	\begin{tikzcd}[row sep=tiny]
	\tC_t \arrow{rr}{\phi_ t} \arrow{rd}{\psi_t} & &  \arrow{ld }{\pi_t} C_t   = \tC_t /\sx \sigma\xs  \\
	& \P^1 &
	\end{tikzcd}
	\end{equation}	

	For a general element $\tC$ of the family, denote by $V:= H^0(\tC, K_{\tC}) \cong V_+ \oplus V_-$, where $V_+$ is the set if $\sigma$-invariants elements, and $V_-$ is the set if $\sigma$-anti-invariants elements. The double cover $\phi: \tC \ra C = \tC/\sx \sigma\xs$ corresponds to a triple $(C, \alpha, B)$, where $B$ is a reduced effective divisor of degree $b$ on $C$ and $\alpha$ is a line bundle on $C$ such that $\alpha^{\otimes 2} = {\mathcal O}_C(B)$. Then $V_+ \cong H^0(C, K_C)$ and $V_- \cong H^0(C, K_C \otimes \alpha) \cong H^{1,0}(P(\tC,C))$. 
	
	For the precise construction of the subvarieties of ${\A}^{\delta}_{g-1+\frac{b}{2}}$ generically contained in the Prym loci given by families of Galois covers, see  \cite[ \S 3]{fgm}. 	Given a family of Galois covers as above, consider the multiplication map $ m : S^2 V \to W= H^0(\tC, K_{\tC}^{\otimes 2})$, which is the dual of the differential of the Torelli map
	$\widetilde{j} : \M_\tg \to \A_\tg$ at the point $[\tC] \in \M_\tg$. The map $m$ is $\tG$-equivariant, hence it maps  $(S^2V)^\tG $ to $H^0(\tC, K_{\tC}^{\otimes 2})^\tG$. We have the following isomorphism:  $(S^2V)^\tG = (S^2(V_+))^{\tG} \oplus (S^2 (V_-))^{\tG}. $
	
	The dual of the differential  of the restriction of  Prym map  to the subvariety of $\rgb$ given by our family of Galois covers at the point $[(C, \alpha, B)]$  is the restriction of the multiplication map $m$ to $(S^2 (V_-))^{\tG}$.  We still denote by 
	\begin{equation}
	m : (S^2 (V_-))^{\tG} \lra H^0(\tC, K_{\tC}^{\otimes 2})^\tG
	\end{equation}
	this restriction (see \cite{fg}, \cite{fgm}).

	In \cite[Theorem~3.2]{fg}, which is a generalisation of Theorems 3.2 and 4.2 in \cite{cfgp}, it is shown that if the map $m : (S^2 (V_-))^{\tG} \lra H^0(\tC, K_{\tC}^{\otimes 2})^\tG$ is an isomorphism, then the family of Pryms yields a Shimura subvariety of $\A^\delta_{g-1+\frac{b}{2}}$. Using this criterion,  in \cite{cfgp}, \cite{fg}, \cite{fgm} many examples of Shimura subvarieties generically contained in the Prym loci have been constructed. 
	
We prove that  in the case of abelian covers, under some assumptions, the above condition is also necessary for such families of abelian covers to yield a Shimura subvariety of $\A^\delta_{g-1+\frac{b}{2}}$ generically contained in the Prym loci.

So, consider now an abelian group $\tG\subseteq (\Z/N\Z)^{m}$ containing an involution $\sigma$ as above and a $\tG$-abelian cover given by the equations \eqref{equation abelian}. 
	Let $n\in\tG$ be the element $(n_1,\dots, n_m)\in (\Z/N\Z)^{m}$ under the inclusion $\tG\subseteq (\Z/N\Z)^{m}$ with $n_i\in\Z\cap[0,N)$. 
	By \eqref{dn}, 
	$$\dim H^0(\tC,K_{\tC})_{n}=-1+\sum_{j=1}^{r}\langle-\frac{\alpha_{j}}{N}\rangle.$$
	A basis for the $\C$-vector space $H^0(\tC,K_{\tC})$ is given by the forms in \eqref{forms}: 
	$$\omega_{n,\nu}=x^{\nu} w_{1}^{n_1}\cdots w_{m}^{n_m}\prod_{j=1}^{r} (x-t_j)^{\lfloor -\frac{\tilde\alpha_j}{N}\rfloor}dx, $$ where $\tilde\alpha_j$ is as introduced above and $0\leq\nu\leq d_{n}-1=-2+\sum_{j=1}^{r}\langle-\frac{\alpha_{j}}{N}\rangle$. 
		
	The action of $\sigma$ is given by 
	$w_i\mapsto -w_i$ for some subset of $\{1,\dots, m\}$, and  $w_j\mapsto w_j$ for $j$ in the complement of this subset. 
	In fact, the elements of order 2 in $(\Z/N\Z)^{m}$ have entries either equal to zero or to $\frac{N}{2}$. Denote by $g_j= (0,...,0,\frac{N}{2},0,...,0)$ the element  of $(\Z/N\Z)^{m}$  where $\frac{N}{2}$ is in the $j$-th position. Then $\sigma = \sum_{j=1}^m \epsilon_j g_j$, where $\epsilon_j$ is either zero, or 1. Then, by the construction of abelian covers (see e.g. \cite{W}),  the action of $\sigma$ is given by: 
	\begin{equation}
	\label{sigma}
	\sigma(w_i) = (e^{\frac{2\pi i}{N}})^{ \frac{N}{2} \cdot \epsilon_i} \cdot w_i	= (-1)^{ \epsilon_i } \cdot w_i = \pm w_i.
	\end{equation}

We may then,  without loss of generality, assume that $\sigma(w_i)= -w_i$ for $i\in\{1,\dots,k\}$ for some $k\leq m$ and $\sigma(w_i)= w_i$ for the $i>k$. We recall  now \cite[Lemma 2.5]{moh}.
	\begin{LEM} \label{dimeigspace}
		The group $\tG$ acts on $H^0(\tC,K_{\tC})_{-}$ and for $n\in\tG$, $n = (n_1,...,n_m)^t$, we have $H^0(\tC,K_\tC)_{-,n}= H^0(\tC,K_\tC)_n$ if $n_1+\cdots+n_k$ is odd and $H^0(\tC,K_\tC)_{-,n}=0$ otherwise. Similar equalities hold for $H^1(\tC,\C)_{-,n}$. 
	\end{LEM}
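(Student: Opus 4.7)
The plan is to read off the action of $\sigma$ directly on the explicit basis \eqref{forms} of $H^0(\tC,K_{\tC})_n$. The $\tG$-eigenspace $H^0(\tC,K_{\tC})_n$ is spanned by the forms $\omega_{n,\nu}$ recalled just before the lemma, each of which is a monomial in the $w_i$'s times factors involving only $x$ and $dx$. Since $\sigma$ fixes $x$ (hence $dx$ and every factor $x-t_j$) and sends $w_i\mapsto -w_i$ for $i\le k$ while fixing $w_j$ for $j>k$, a one-line computation shows that $\sigma(\omega_{n,\nu})=(-1)^{n_1+\cdots+n_k}\omega_{n,\nu}$. In particular the entire eigenspace $H^0(\tC,K_{\tC})_n$ sits inside a single $\sigma$-eigenspace, with sign determined by the parity of $n_1+\cdots+n_k$.

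From there I would first observe that $\tG$ preserves the splitting $V=V_+\oplus V_-$: since $\tG$ is abelian and $\sigma\in\tG$, every $g\in\tG$ commutes with $\sigma$, so the $\pm1$-eigenspaces of $\sigma$ are $\tG$-stable. Combined with the previous step this gives the first claim, namely that $H^0(\tC,K_{\tC})_{-,n}$ is either all of $H^0(\tC,K_{\tC})_n$ or $\{0\}$ according to the parity of $n_1+\cdots+n_k$.

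For the $H^1$ statement the plan is to reduce to the holomorphic case via Hodge theory. The decomposition $H^1(\tC,\C)=H^0(\tC,K_{\tC})\oplus\overline{H^0(\tC,K_{\tC})}$ is preserved both by $\tG$ and by $\sigma$ since they act by holomorphic automorphisms. Complex conjugation sends the $n$-eigenspace of $\tG$ into the $(-n)$-eigenspace but commutes with $\sigma$ (whose eigenvalues are real), so that $H^1(\tC,\C)_n=H^0(\tC,K_{\tC})_n\oplus\overline{H^0(\tC,K_{\tC})_{-n}}$. The statement will then follow once the two summands are seen to lie in the same $\sigma$-eigenspace. Writing $(-n)_i=N-n_i$ for $n_i\ne 0$ and using crucially that $N$ is even, one gets $(-n)_1+\cdots+(-n)_k\equiv n_1+\cdots+n_k\pmod 2$, so both summands share the same $\sigma$-sign and the analogous identity holds.

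There is no serious obstacle: the argument is a direct bookkeeping on the explicit eigenforms. The only subtlety worth flagging is the parity comparison for the anti-holomorphic summand in the $H^1$-statement, which uses in an essential way that $N$ is even and hence that $\sigma$ is represented in $(\Z/N\Z)^m$ by an element with entries in $\{0,N/2\}$.
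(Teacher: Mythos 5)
The paper does not actually prove this statement: it is recalled verbatim as \cite[Lemma 2.5]{moh}, so there is no in-paper proof to compare against. Your argument is correct. The computation $\sigma^*\omega_{n,\nu}=(-1)^{n_1+\cdots+n_k}\,\omega_{n,\nu}$ on the explicit basis \eqref{forms} is right (using the lifts $n_i\in\Z\cap[0,N)$), and together with the $\tG$-stability of $V_\pm$ it gives the $H^0$ claim. Two remarks. First, the whole lemma is available more cheaply: since $\sigma$ is an element of the abelian group $\tG$, it acts on the $\chi_n$-eigenspace of \emph{any} $\tG$-module --- $H^0(\tC,K_{\tC})$ or $H^1(\tC,\C)$ alike --- by the scalar $\chi_n(\sigma)$, and $\sigma$ corresponds to $(\tfrac N2,\dots,\tfrac N2,0,\dots,0)^t$, so $\chi_n(\sigma)=e^{\pi\i(n_1+\cdots+n_k)}=(-1)^{n_1+\cdots+n_k}$; this disposes of both statements at once without the explicit basis or the Hodge decomposition. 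Second, your detour through $H^1(\tC,\C)_n=H^0(\tC,K_{\tC})_n\oplus\overline{H^0(\tC,K_{\tC})_{-n}}$ is nevertheless sound: the parity check $(-n)_1+\cdots+(-n)_k\equiv n_1+\cdots+n_k\pmod 2$ is correct, though it is worth noting that the evenness of $N$ which you flag as essential is automatic here, since $(\Z/N\Z)^m$ contains an involution only when $N$ is even, and in any case $N-n_i\equiv -n_i\equiv n_i\pmod 2$.
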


Under these hypotheses, we have the following 
\begin{THEOR}
\label{prym}
Assume that the multiplication map $m: (S^2(V_-))^{\tG} \ra H^0(\tC, K_{\tC}^{\otimes 2})^{\tG} $ is surjective at the generic point of the family.  Then
\begin{enumerate}
\item If there exists $n \in \tG$ of order greater than 2, $\dim (V_-)_n = d_n \geq 2$ and $ \dim (V_-)_{(-n)} =d_{-n}\geq 2$, then the family of Pryms gives a subvariety of $\A^\delta_{g-1+\frac{b}{2}}$ which is not totally geodesic. 
\item If there exists $n \in \tG$ such that $n = -n$ and $\dim(V_-)_n =d_n\geq 3$, then the family of Pryms gives a subvariety of $\A^\delta_{g-1+\frac{b}{2}}$ which is not totally geodesic. 
\end{enumerate}
\end{THEOR}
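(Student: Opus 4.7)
The plan is to imitate the proof of Theorem \ref{torelli}, but working with the second fundamental form $\rho_P$ of the Prym map studied in \cite{cf1}, \cite{cf2} in place of the Torelli second fundamental form $\rho$. The dual of $\rho_P$ is defined on the conormal space at $[(C,\alpha,B)]$ to the subvariety of $\A^\delta_{g-1+b/2}$ cut out by our family; the surjectivity hypothesis on $m:(S^2V_-)^{\tG}\to H^0(\tC,K_{\tC}^{\otimes 2})^{\tG}$ ensures that this conormal space is exactly $\ker m$. So the first task is to exhibit elements of $\ker m$ constructed from anti-invariant differentials.

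For step one I would build the quadrics explicitly. In case $(1)$, by Lemma \ref{dimeigspace} both $V_{-,n}$ and $V_{-,-n}$ sit inside $V_-$, and each of them contains a pencil $\langle\omega,x\omega\rangle$ by the basis description \eqref{forms}. Picking $\omega_1,\omega_2=x\omega_1\in V_{-,n}$ and $\omega_3,\omega_4=x\omega_3\in V_{-,-n}$, the quadric $Q=\omega_1\odot\omega_4-\omega_2\odot\omega_3$ belongs to $(S^2V_-)^{\tG}$ (the characters cancel) and lies in $\ker m$. In case $(2)$, with $n=-n$ and $d_n\geq 3$ in $V_-$, I would pick $\omega_1,x\omega_1,x^2\omega_1\in V_{-,n}$ and form $Q=\omega_1\odot(x^2\omega_1)-(x\omega_1)\odot(x\omega_1)$, again $\tG$-invariant and in $\ker m$.

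Step two is to pair $Q$ against a tangent vector that is tangent to the family of covers. Take a generic fiber $\psi^{-1}(t)=\{q_1,\dots,q_{2d}\}$ of $\psi:\tC\to\P^1=\tC/\tG$ and set $v=\sum_i\xi_{q_i}\in H^1(T_\tC)^{\tG}$, choosing $t$ so that no $q_i$ is a ramification point of $\psi$, none sits in the base locus of the pencil $\langle\omega_1,\omega_3\rangle$ (resp.\ $\langle\omega_1,x\omega_1\rangle$), and none is a ramification point of the map defined by this pencil. The formula for $\rho_P$ from \cite{cf1}, \cite{cf2} has, as in the Torelli case, the shape
\begin{equation*}
\rho_P(Q)(v\odot v)=-4\pi\i\sum_{i\neq j}Q(q_i,q_j)\,\eta_{q_i}(q_j)+2d\,\rho_P(Q)(\xi_{q_1}\odot\xi_{q_1}),
\end{equation*}
with $\rho_P(Q)(\xi_{q_1}\odot\xi_{q_1})$ equal, up to a nonzero constant, to the value at $q_1$ of the second Gaussian map $\mu_2(Q)$. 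Since $x(q_i)=x(q_j)=t$ for every $i\neq j$, the off-diagonal terms $Q(q_i,q_j)$ all vanish, so the whole expression collapses to a nonzero multiple of $\mu_2(Q)(q_1)$.

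Step three is the local computation already carried out in the Torelli proof: in case $(1)$, writing $\omega_i=f_i(z)\,dz$ near $q_1$ with $f_2=xf_1$, $f_4=xf_3$ gives $\mu_2(Q)=x'(z)\,dz\cdot\mu_1(\omega_1\wedge\omega_3)$, which is nonzero at $q_1$ by our choice of $t$; in case $(2)$ one gets $\mu_2(Q)=-(x'(z))^2f_1(z)^2$, also nonzero at $q_1$. Hence $\rho_P(Q)(v\odot v)\neq 0$ and the family is not totally geodesic. Finally, as in Theorem \ref{torelli}, if the generic $\tC$ happens to be hyperelliptic I would enlarge $\tG$ by the hyperelliptic involution (which is central in $\Aut(\tC)$, so the enlarged group is still abelian) and repeat the argument using the second fundamental form of the restriction of the Prym map to the hyperelliptic locus. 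The main obstacle I expect is the first of these steps, namely pinning down the precise form of $\rho_P$ from \cite{cf1,cf2} and verifying that, modulo the surjectivity hypothesis, the Schiffer-variation computation transfers verbatim from the Torelli setting to the Prym setting with $Q\in(S^2V_-)^{\tG}\cap\ker m$ and $v\in H^1(T_\tC)^{\tG}$; once that is in hand, the remaining steps are direct transcriptions of the Torelli argument.
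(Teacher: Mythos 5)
Your proposal is correct and follows essentially the same route as the paper: the quadrics, the Schiffer-variation tangent vector supported on a generic fibre of $\psi$, the reduction to the second Gaussian map, and the role of the surjectivity hypothesis all match, and the one step you flag as the main obstacle is exactly what the paper imports from \cite[Section 2]{cf2}, namely that $\rho_P(Q)(v\odot v)=\tilde\rho(Q)(v\odot v)$ for $Q\in I_2(K_C\otimes\alpha)^{\tG}$ and $v\in H^1(T_{\tC})^{\tG}$, where $\tilde\rho$ is the Torelli second fundamental form of $\tC$. The only cosmetic discrepancy is your fibre cardinality $2d$ versus the paper's $d=\#\tG$, which is just a choice of whether $d$ denotes the order of $\tG$ or of $G=\tG/\langle\sigma\rangle$.
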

	\begin{proof}
	Assume that $\tG \subseteq  (\Z/N\Z)^m$ and the family of $\tG$-covers is given by a matrix $A=(r_{ij})$ whose entries are in $\Z/N\Z$ for some $N\geq 2$. 
	Then the equations for the cover $\tG$ are 
	\begin{equation}
	w_{i}^{N}=\prod_{j=1}^{r}(x-t_{j})^{\widetilde{r}_{ij}}\quad\text{for }i=1,\dots, m,
	\end{equation}
and we can assume that the involution $\sigma$ is given by $\sigma(w_i) = -w_i$, $\forall i =1,...,k$, $\sigma(w_i) = w_i$, $\forall i \geq k+1$. Then, denoting as above by  $V = H^0(\tC, K_{\tC})$ and by 
$$V_-  = H^0(\tC, K_{\tC})^- \cong H^0(C, K_C \otimes \alpha) \cong H^{1,0}(P(\tC,C)),$$ by Lemma \ref{dimeigspace}, we have $(V_-)_n =0$, if $n_1 +...+n_k$ is even, while $(V_-)_n =V_n$ if $n_1 +...+n_k$ is odd. 

Hence if we are in case $(1)$, there exist subspaces $ \langle \omega_1, \omega_2 = x \omega_1 \rangle  \subseteq (V_-)_n =$, and $\langle \omega_3, \omega_4 = x \omega_3 \rangle \subseteq (V_-)_{(-n)}$. So the quadric $Q:= \omega_1 \odot \omega_4 - \omega_2 \odot \omega_3$ is $\tG$-invariant and it belongs to the kernel $I_2(K_C \otimes \alpha)^{\tG}$ of the multiplication map $m: (S^2(V_-))^{\tG} \ra H^0(\tC, K_{\tC}^{\otimes 2})^{\tG} $. The assumption on the surjectivity of the map $m$ is equivalent to saying that the differential of the restriction of the Prym map to the subvariety $\mathsf{X}$ of $\rgb$ given by our family of abelian covers is injective. So it is possible to study the second fundamental form of the immersion of ${\mathsf X}$ in $\A^\delta_{g-1+\frac{b}{2}}$ given by the restriction of the Prym map to ${\mathsf X}$. 

In \cite[section 2]{cf2} it is proven that if 
$$\rho_P: I_2(K_C \otimes \alpha)^{\tG} \ra (S^2 H^0(\tC, K_{\tC}^{\otimes 2}))^{\tG}$$ is the dual of the second fundamental form of the restriction of the Prym map to ${\mathsf X}$ we have 
$$\rho_P(Q)(v \odot v) = \tilde{\rho}(Q)(v \odot v),$$ $\forall v \in H^1(T_{\tC})^{\tG}$, where $\tilde{\rho}$ is the dual of the second fundamental form of the Torelli map of the family of covers $\psi: \tC \ra \tC/\tG = \PP^1$. As in the proof of Theorem \ref{torelli}, if the family of covers $\tC \ra \tC/\tG$ is contained in the hyperelliptic locus then $\tilde{\rho}(Q)$ is the section $ \tilde{Q} \cdot \hat{\eta} $, seen as an element in $S^2 H^0(K_{\tC}^{\otimes 2})$ as in \cite[Theorem 3.7]{cfg}.

Let $t \in \PP^1$ be a general point, then $\psi^{-1}(t) = \{p_1,...,p_d\}$, where $d$ is the order of $\tG$ and $p_i \neq p_j$, $\forall i \neq j$. Set $v:= \sum_{i=1}^d\xi_{p_i}$. Then clearly $v \in H^1(T_{\tC})^{\tG}$ and we have: 

\begin{gather*}
\rho_P(Q)(v \odot v) = \tilde{\rho}(Q)(v \odot v) = \sum_{i \neq j} \tilde{\rho}(Q)( \xi_{p_i} \odot \xi_{p_j}) +  \sum_{i=1}^d \tilde{\rho}(Q)( \xi_{p_i} \odot \xi_{p_i}) =\\
= -4 \pi \i \sum_{i \neq j}Q(p_i,p_j) \tilde{\eta}_{p_i}(p_j) + d \tilde{\rho}(Q)(\xi_{p_1} \odot \xi_{p_1}) =  - 2 \pi \i d \mu_2(Q)(p_1),
\end{gather*}
where $\mu_2: I_2(K_{\tC}) \ra H^0(\tC, K_{\tC}^{\otimes 4})$ is the second Gaussian map of the canonical line bundle $K_{\tC}$. 
Here we used again the $\tG$-equivariance of $\tilde{\rho}$ and the fact that $Q(p_i, p_j) = 0$, for all $i,j$, since $x(p_i) = x(p_j)$.  Hence if we take $t \in \P^1$ generic as in the proof of Theorem \ref{torelli} we have  $ \mu_2(Q)(p_1) \neq 0$. This concludes the proof of case $(1)$. 

Case $(2)$ is very similar: by assumption there exist a  subspace $\langle \omega_1, \omega_2 = x \omega_1, \omega_3 = x^2 \omega_1 \rangle \subseteq (V_-)_n = V_n$, with $n = -n$. Thus we can take the quadric $Q:= \omega_1 \odot \omega_3 - \omega_2 \odot \omega_2 \in I_2(K_C\otimes \alpha)^{\tG}.$ For a general fibre $\psi^{-1}(q) =\{p_1,...,p_d\}$, we have again 
\begin{equation}
\rho_P(Q)(v \odot v)  = - 4 \pi \i \sum_{i \neq j} Q(p_i, p_j) \cdot \eta_{p_i}(p_j) + d   \rho(Q)  (\xi_{p_1}\odot \xi_{p_1}) =-2 \pi \i d \mu_2(Q)(p_1) \neq 0,
\end{equation}
for $t$ generic.

	\end{proof}
	
	\begin{REM}
	The assumption on the surjectivity of the multiplication map $m: (S^2(V_-))^{\tG} \ra H^0(\tC, K_{\tC}^{\otimes 2})^{\tG} $ at the generic point of the family is automatically satisfied if $b \geq 6$, thanks to the Prym-Torelli theorem proved in \cite{ikeda}, \cite{no1}. 
	\end{REM}
	
	\begin{COR}
\label{Z2prym}
With the above notation,  assume that the multiplication map $m: (S^2(V_-))^{\tG} \ra H^0(\tC, K_{\tC}^{\otimes 2})^{\tG} $ is surjective, $\tG = (\Z/2\Z)^m$, and $\sigma(w_i) = -w_i$, for $i=1,...,m$. If the family of Pryms yields a totally geodesic subvariety, then $r \leq 6m$ and $\tg \leq 1 + 2^{m-1}(3m-2)$.   \end{COR}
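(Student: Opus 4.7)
The plan is to mimic the argument of Corollary \ref{Z2} but applying Theorem \ref{prym}(2) in place of Theorem \ref{torelli}(2), and using Lemma \ref{dimeigspace} to control which eigenspaces $(V_-)_n$ are nonzero. Since $\tG=(\Z/2\Z)^m$, every nonzero element $n\in\tG$ satisfies $n=-n$, so to apply Theorem \ref{prym}(2) we only need to locate a single $n$ with $\dim(V_-)_n\geq 3$ to force the family to be \emph{not} totally geodesic. Therefore, if the family yields a totally geodesic subvariety of $\A^\delta_{g-1+\frac{b}{2}}$, we must have $\dim(V_-)_n\leq 2$ for every $n\in\tG$.

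By hypothesis $\sigma(w_i)=-w_i$ for $i=1,\dots,m$, so in the notation of Lemma \ref{dimeigspace} we have $k=m$. Thus $(V_-)_n=V_n$ when $n_1+\cdots+n_m$ is odd and $(V_-)_n=0$ otherwise. In particular, each standard basis vector $e_i\in\tG$ has $(V_-)_{e_i}=V_{e_i}$, so the totally geodesic assumption forces
\[
d_{e_i}=\dim V_{e_i}\leq 2 \qquad \text{for every } i=1,\dots,m.
\]
Writing $e_i^{\,t}\cdot A=(r_{i1},\dots,r_{ir})$ for the $i$-th row of $A$ and letting $\beta_i$ be its number of nonzero entries, formula \eqref{dn} (with $N=2$, so $\langle -\tilde r_{ij}/2\rangle$ equals $0$ or $1/2$) gives
\[
d_{e_i}=-1+\sum_{j=1}^{r}\Bigl\langle -\frac{\tilde r_{ij}}{2}\Bigr\rangle=-1+\frac{\beta_i}{2},
\]
whence $\beta_i\leq 6$. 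Since the columns of $A$ give the monodromy of the cover and hence are all nonzero, the total number of nonzero entries is at least $r$, so
\[
r\leq \sum_{i=1}^{m}\beta_i\leq 6m.
\]

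Finally, Riemann--Hurwitz applied to the Galois $\tG$-cover $\psi:\tC\to\PP^1$ (every $m_j=2$, since columns have order $2$ in $\tG$) yields
\[
2\tg-2=-2\cdot 2^m+2^m\sum_{j=1}^{r}\Bigl(1-\tfrac12\Bigr)=-2^{m+1}+2^{m-1}r,
\]
so $\tg=1-2^m+2^{m-2}r\leq 1+2^{m-1}(3m-2)$, completing the argument. The only genuinely substantive step is the eigenspace bookkeeping via Lemma \ref{dimeigspace}; notice that, unlike the Torelli case in Corollary \ref{Z2}, one cannot extract an extra bound of the form $m\leq 6$ by testing the character $(1,\dots,1)^t$, because when $m$ is even this character gives $(V_-)_{(1,\dots,1)}=0$ and Theorem \ref{prym} yields no constraint.
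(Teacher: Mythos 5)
Your proof is correct and follows essentially the same route as the paper: Lemma \ref{dimeigspace} with $k=m$ gives $(V_-)_{e_i}=V_{e_i}$, Theorem \ref{prym}(2) then forces $d_{e_i}\le 2$ under the totally geodesic assumption, and the row-counting plus Riemann--Hurwitz argument of Corollary \ref{Z2} finishes. Your closing observation about why the extra bound $m\le 6$ cannot be extracted via the character $(1,\dots,1)^t$ when $m$ is even is accurate and correctly accounts for its absence from the statement.
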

\begin{proof}
Since $\sigma(w_i) = -w_i$, $\forall i =1,...,m$, setting as usual $e_i = (0,...,0,1,0,...,0)^t \in \tG$, by Lemma \ref{dimeigspace} we have $V_{e_i} = (V_-)_{e_i}$, for all $i =1,...,m$. Hence  by Theorem \ref{prym} (2),  if the family is totally geodesic,  we must have $d_{e_i} \leq 2$, $\forall i=1,...,m$. 
So we conclude as in Corollary \ref{Z2}. 

\end{proof}

\begin{COR}
\label{general-prym}
Assume that the multiplication map $m: (S^2(V_-))^{\tG} \ra H^0(\tC, K_{\tC}^{\otimes 2})^{\tG} $ is surjective, $\tG \subseteq (\Z/N\Z)^m$, $N \geq 3$, $\sigma(w_i) = -w_i$, for $i=1,...,m$.  Set $d:= \#\tG$. If the family of Pryms yields a totally geodesic subvariety, then $r \leq 2Nm $
and $\tg \leq 1 +d (-1 + m(N-1)) .$
 \end{COR}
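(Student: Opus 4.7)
The plan is to imitate the proof of Corollary \ref{general} almost verbatim, substituting Theorem \ref{prym} for Theorem \ref{torelli}. The bridge between the Prym setting and the Torelli-style bookkeeping is Lemma \ref{dimeigspace}, which, under the present hypotheses, identifies the anti-invariant eigenspaces $(V_-)_{\pm e_i}$ with the full eigenspaces $V_{\pm e_i}$; the surjectivity of $m$ is invoked precisely to apply Theorem \ref{prym}.

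First I would unpack the hypothesis $\sigma(w_i)=-w_i$ for all $i=1,\dots,m$. Writing $\sigma=(\sigma_1,\dots,\sigma_m)\in\tG\subseteq(\Z/N\Z)^m$, the action on $w_i$ is multiplication by $e^{2\pi\i\sigma_i/N}$, so $e^{2\pi\i\sigma_i/N}=-1$ for every $i$. This forces $N$ to be even and $\sigma=(N/2,\dots,N/2)^t$; in the notation of Lemma \ref{dimeigspace}, we have $k=m$. (For $N$ odd, no involution exists in $\tG$ and the statement is vacuous.)

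Next, for each $i=1,\dots,m$ consider the character $e_i=(0,\dots,0,1,0,\dots,0)^t$ and its negative $-e_i$, whose lifts to $\Z\cap[0,N)^m$ have coordinate sums $1$ and $N-1$, respectively, both odd since $N$ is even. By Lemma \ref{dimeigspace} we get $(V_-)_{\pm e_i}=V_{\pm e_i}$, hence $\dim(V_-)_{\pm e_i}=d_{\pm e_i}$. Applying Theorem \ref{prym} on $e_i$, total geodesicity of the Prym family forces the following alternative for each $i$: either the character $e_i$ has order $2$ (equivalently $r_{ij}\in\{0,N/2\}$ for all $j$) and then $d_{e_i}\leq 2$; or else $\min(d_{e_i},d_{-e_i})\leq 1$. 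Exactly as in Corollary \ref{general}, formula \eqref{dn} translates these into $\beta_i\leq 6$ in the first case and $\beta_i\leq 2N$ in the second, where $\beta_i$ counts the nonzero entries in the $i$-th row of $A$. Since $N\geq 4$ (as $N$ is even and $N\geq 3$) we have $2N\geq 6$, so $\beta_i\leq 2N$ uniformly.

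Finally, every column of $A$ is nonzero, so each of the $r$ columns contains at least one nonzero entry, which yields $r\leq\sum_i\beta_i\leq 2Nm$. Then Riemann--Hurwitz applied to $\psi:\tC\to\P^1$ with Galois group $\tG$ of order $d$ and ramification orders $m_j\leq N$ gives
\begin{equation*}
2\tg-2=d\Bigl(-2+\sum_{j=1}^r(1-1/m_j)\Bigr)\leq d\bigl(-2+r(1-1/N)\bigr)\leq 2d\bigl(m(N-1)-1\bigr),
\end{equation*}
whence $\tg\leq 1+d(m(N-1)-1)$. The only non-routine step is the parity verification feeding Lemma \ref{dimeigspace}, which is the unique place where the full hypothesis $\sigma(w_i)=-w_i$ for every $i$ (as opposed to some proper subset) is used; beyond that, the argument is a direct transcription of Corollary \ref{general}, so I do not anticipate any substantive obstacle.
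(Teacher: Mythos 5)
Your proposal is correct and follows the paper's own proof essentially verbatim: Lemma \ref{dimeigspace} to identify $(V_-)_{\pm e_i}$ with $V_{\pm e_i}$, Theorem \ref{prym} in place of Theorem \ref{torelli}, and then the row-counting and Riemann--Hurwitz estimates copied from Corollary \ref{general}. Your explicit parity check for $-e_i$ (coordinate sum $N-1$, odd since $N$ is even) is a detail the paper leaves implicit but is indeed needed to apply Theorem \ref{prym}(1).
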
	

\begin{proof} 
Since $\sigma(w_i) = -w_i$, $\forall i =1,...,m$,  by Lemma \ref{dimeigspace} we have $V_{e_i} = (V_-)_{e_i}$, for all $i =1,...,m$.
Hence by Theorem \ref{prym}, for each row $e_i^t \cdot A$, $i=1,...,m$, we must have $d_{e_i} \leq 2$, if $e_i \cdot A$ has order 2, otherwise either $d_{e_i} \leq 1$, or $d_{-e_i} \leq 1$. 

As in Corollary \ref{general},  we show that in the first case we obtain $\beta_i\leq 6$, while in the second case $\beta_i \leq 2N$, $i=1,..,m$ and we conclude as in Corollary \ref{general}.

\end{proof}
	
\begin{REM}
\label{remark}
Notice that in the case $\tG= (\Z/N\Z)^m$ we can always assume that $\sigma(w_i) = -w_i$, for $i=1,...,m$, that is, $\sigma= (\frac{N}{2},...,\frac{N}{2})^t$, since, given any two elements $\sigma_1$, $\sigma_2$ of order 2, there always exists an automorphism of $\tG$ sending $\sigma_1$ to $\sigma_2$. 
\end{REM}	

\begin{proof}
Each order 2 element $\sigma_1$ of $(\Z/N\Z)^m$ has entries equal either to 0 or to $\frac{N}{2}$. Consider the element $e \in (\Z/N\Z)^m$ whose entries are either 0 or 1 and such that the zero entries are in the same positions as the ones equal to zero in $\sigma_1$ (e.g. if $\sigma_1 = (\frac{N}{2},0,...,0)^t$, then $e=(1,0,...,0)^t$).  Assume that the $i$-th entry of $\sigma_1$ is $\frac{N}{2}$. Denote as usual by $e_j$ the element having the $j$-th entry equal to one and all the other entries equal to zero.  Denote by $\phi$ the automorphism of $(\Z/N\Z)^m$ sending $e$ to $(1,1,...,1)^t$ and $e_j$ to $e_j$ for all $j \neq i$. Then clearly $\phi(\sigma_1) = (\frac{N}{2},...,\frac{N}{2})^t$. 

By \eqref{sigma} the action of $\sigma = (\frac{N}{2},...,\frac{N}{2})^t$ is $\sigma(w_i) = -w_i $, $\forall i$, since here $\epsilon_i =1$, $\forall i$.  

\end{proof}	
	
	Corollaries \ref{Z2prym}, \ref{general-prym} and Remark \ref{remark} prove Theorem \ref{B}. 
	
We give now some examples where the assumptions of Theorem \ref{prym} are satisfied.\\

		Clearly the above estimates are not sharp, as one can see by example 2. \\

		{\bf Example 1} $r=8$, $b=8$, $\tg = 5$, $g=1$. 	
	$\tG = \Z/ 2 \Z \times \Z/2\Z$. 
	The family of covers is given by the matrix 
	
	$$A=  {\small \left( \begin{array}{cccccccccc}
			1&1&1&1&0&0&0&0\\
			1&1&1&1&1&1&1&1\\
			\end{array} \right)},$$
			so the equations are: 
	
	\begin{gather*}
	w_1^2 = (x -t_1)(x-t_2) (x-t_3)(x-t_4)\\
	w_2^2 = (x-t_1)(x-t_2)(x-t_3)(x-t_4)(x-t_5)(x-t_6)(x-t_7)(x-t_8).
	\end{gather*} 
	By the Riemann Hurwitz formula, one immediately computes $\tg =5$. 
	The involution $\sigma= (1,1)^t$ acts as follows: $\sigma(w_1) = -w_1$, $\sigma(w_2) = -w_2$. The map $\tC \ra C = \tC/\langle \sigma \rangle$ ramifies over $t_i$, $i=1,...,4$, hence it has 8 ramification points,  so by the Riemann Hurwitz we see that $g = g(C) = 1$. 
	
We  compute: 
	
	 $$d_{(1,0)} = -1 + \frac{4}{2} = 1,$$ and 
	$$(V_-)_{(1,0)} =\big{ \langle} \alpha_1 = w_1  \frac{dx}{(x-t_1)(x-t_2)(x-t_3)(x-t_4)} = \frac{dx}{w_1}\big{\rangle},$$
	
	$$d_{(0,1)} = -1 + \frac{8}{2} = 3,$$ and 
	$$(V_-)_{(0,1)} =\big{ \langle} \alpha_2 = w_2 \frac{dx}{\prod_{i=1}^8 (x - t_i)} = \frac{dx}{w_2}, \alpha_3 = x \frac{dx}{w_2}, \alpha_4 = x^2 \frac{dx}{w_2} \big{\rangle}.$$
	Hence $V_- =\big{ \langle} \alpha_1, \alpha_2, \alpha_3, \alpha_4 \big{\rangle},$ and 
	$$(S^2(V_-))^{\tG} = \big{ \langle}\alpha_1 \odot \alpha_1,   \alpha_2 \odot \alpha_2, \alpha_2 \odot \alpha_3, \alpha_2 \odot \alpha_4, \alpha_3 \odot \alpha_3, \alpha_3 \odot \alpha_4, \alpha_4\odot \alpha_4  \big{\rangle} \cong \C^7.$$
	
	Since the differential of the  Prym map ${\mathsf P}_{1,8} : {\mathsf R}_{1,8} \ra \A_4^{\delta}$ is injective (\cite{no1}), the multiplication map 
	$$m: (S^2(V_-))^{\tG} \cong \C^7  \ra H^0(\tC, K_{\tC}^{\otimes 2})^{\tG} \cong \C^5 $$
	is surjective, and $\dim(V_-)_{(0,1)}= 3$, so we can apply Theorem \ref{prym} (2) and conclude that the family of Pryms is not totally geodesic. 
	
	One can also explicitly compute the kernel of the multiplication map as follows: 
We have 
$${\alpha_1}^2 = \frac{{dx}^2}{(x-t_1)(x-t_2)(x-t_3)(x-t_4)}, \ {\alpha_2}^2 = \frac{{dx}^2}{\prod_{i=1}^8 (x -t_i)}, \ \alpha_2 \alpha_3 = x \alpha_2^2,$$ 
$$\alpha_2 \alpha_4 = x^2 \alpha_2^2, \ \alpha_3^2 = x^2 \alpha_2^2, \ \alpha_3 \alpha_4 = x^3 \alpha_2^2,  \ \alpha_4^2 = x^4 \alpha_2^2.$$
We can assume that $t_5 =0$, $t_6 =1$, $t_7 =-1$, so one easily computes that 
$$a_1 \alpha_1^2 + a_2 \alpha_2^2 + a_3 \alpha_2 \alpha_3 + a_4 \alpha_2 \alpha_4 + a_5 \alpha_3^2 + a_6 \alpha_3 \alpha_4 + a_7 \alpha_4^2 =0,$$
if and only if $a_7 = -a_1$, $a_6 = t_8 a_1$, $ a_5 = -a_4 +a_1$, $a_3 = -t_8a_1$, $a_2 =0$. 
 Hence the kernel of the multiplication map 
 $$m: (S^2(V_-))^{\tG} \cong \C^7  \ra H^0(\tC, K_{\tC}^{\otimes 2})^{\tG} \cong \C^5 $$ has dimension 2 and it is generated by the quadrics 
 $$Q_1 = \alpha_2 \odot \alpha_4 - \alpha_3 \odot \alpha_3, \ Q_2 = \alpha_1 \odot \alpha_1 -t_8 \alpha_2 \odot \alpha_3 + \alpha_3 \odot \alpha_3 + t_8 \alpha_3 \odot \alpha_4 - \alpha_4 \odot \alpha_4.$$ Since $\dim (S^2(V_-))^{\tG} = 7$ and $\dim H^0(\tC, K_{\tC}^{\otimes 2})^{\tG} =8-3=5$, we conclude that $m$ is surjective. \\

		{\bf Example 2} $r=8$, $b=0$, $\tg = 33$, $g=17$. 	
	$\tG = \Z/ 4 \Z \times \Z/4\Z$. 
	The family of covers is given by the matrix 
	
	$$A=  {\small \left( \begin{array}{cccccccccc}
			1&1&1&1&0&0&0&0\\
			0&0&0&0&1&1&1&1\\
			\end{array} \right)},$$
			so the equations are: 
	
	\begin{gather*}
	w_1^4 = (x -t_1)(x-t_2) (x-t_3)(x-t_4)\\
	w_2^4 = (x-t_5)(x-t_6)(x-t_7)(x-t_8).
	\end{gather*} 
	By the  Hurwitz formula one immediately computes $\tg = 33$. 
	The involution $\sigma= (1,1)^t$ acts as follows: $\sigma(w_1) = -w_1$, $\sigma(w_2) = -w_2$. The map $\tC \ra C = \tC/\langle \sigma \rangle$ is \'etale so $g= 17$. 
	
	We  compute: 

	 $$d_{(1,0)} = -1 + 4 \cdot \frac{3}{4}  = 2 = d_{(0,1)},$$
		
	$$d_{(3,0)} = -1 + 4 \cdot \frac{1}{4} = 0 =d_{(0,3)},$$
		
	$$d_{(1,2)} = -1 + 4 \cdot \frac{3}{4} + 4 \cdot  \frac{1}{2}= 4 = d_{(2,1)},$$
	$$d_{(3,2)} = -1 + 4\cdot \frac{1}{4} + 4 \cdot \frac{1}{2}= 2 = d_{(2,3)}.$$
	
	So $$(S^2(V_-))^{\tG} =  \big{(}(V_-)_{(1,2)} \otimes  (V_-)_{(3,2)}\big{)} \oplus \big{ (}(V_-)_{(2,1)} \otimes  (V_-)_{(2,3)}\big{)}$$ and since $d_{(1,2)}  =4$ and $d_{(3,2)} =2$,  Theorem \ref{prym} (1) applies, provided that the multiplication map $m: (S^2(V_-))^{\tG} \cong \C^{16}  \ra H^0(\tC, K_{\tC}^{\otimes 2})^{\tG} \cong \C^5 $ is surjective. 
	
	We show that the restriction of $m$ to $ \big{(}(V_-)_{(1,2)} \otimes  (V_-)_{(3,2)}\big{)} $ has rank 5, hence $m$ is surjective. 
	A basis of $(V_-)_{(1,2)} $ is 
	$$\{ \beta_1 = \frac{w_1w_2^2 dx}{\prod_{i=1}^8 (x-t_i)}, \beta_2 = x\beta_1, \beta_3 = x^2\beta_1, \beta_4 = x^3\beta_1\}.$$  A basis of $(V_-)_{(3,2)} $ is $\{ \gamma_1 = \frac{w_1^3w_2^2 dx}{\prod_{i=1}^8 (x-t_i)}, \gamma_2 = x\gamma_1\}$. 
	So a basis of $ \big{(}(V_-)_{(1,2)} \otimes  (V_-)_{(3,2)}\big{)} $ is given by $\{ \beta_i \odot \gamma_j \ | \ i=1,...,4, \  j=1,2\}$. We have 
	$$\sum_{i,j}  a_{ij} \beta_i \gamma_j=0$$
	if and only if 
	$$\beta_1 \gamma_1 (a_{11} + xa_{21} + x^2a_{31} + x^3 a_{41} + xa_{12} + x^2a_{22} + x^3 a_{32} + x^4a_{42}) =0.$$
	So we get $a_{11} = a_{42} =0$, $a_{21} = -a_{12}$, $a_{31} = - a_{22}$, $a_{41} = -a_{32}$. Thus the kernel of the restriction of $m$ to $ \big{(}(V_-)_{(1,2)} \otimes  (V_-)_{(3,2)}\big{)} $ has dimension 3, hence $m$ is surjective and  the family of Pryms is not totally geodesic.

\section{Statements and Declarations}
Data sharing not applicable to this article, as no data sets were generated or analysed during the current study. \\
	
 The author states that there is no conflict of interest. 

\end{document}